\documentclass{article}

\usepackage{amsmath}
\usepackage{multicol}
\usepackage{upgreek}
\usepackage[utf8]{inputenc}\usepackage[T1]{fontenc}
\usepackage{babel}
\usepackage[utf8]{inputenc}
\usepackage{setspace}                   
\usepackage{indentfirst}                
\usepackage{makeidx}                    
\usepackage[nottoc]{tocbibind}          
\usepackage{courier}                    
\usepackage{type1cm}                    
\usepackage{listings}                   
\usepackage{titletoc}


\usepackage[font=small,format=plain,labelfont=bf,up,textfont=it,up]{caption}
\usepackage[usenames,svgnames,dvipsnames]{xcolor}
\usepackage[a4paper,top=2.54cm,bottom=2.0cm,left=2.0cm,right=2.54cm]{geometry} 
\usepackage[plainpages=false,pdfpagelabels,pagebackref=false,colorlinks=true,citecolor=DarkGreen,linkcolor=NavyBlue,urlcolor=DarkRed,filecolor=green,bookmarksopen=true]{hyperref} 
\usepackage[all]{hypcap}                
\fontsize{60}{62}\usefont{OT1}{cmr}{m}{n}{\selectfont}

\usepackage{amssymb,amsthm, amsmath}
\usepackage{empheq}

\usepackage{enumitem}
\usepackage{mathrsfs}

\usepackage{color}

\usepackage[T1]{fontenc}

\usepackage[skip=-5pt]{caption}
\captionsetup{font={stretch=0.6,small}} 

\usepackage{mathtools} 

\usepackage{hyperref}
\usepackage{xcolor}



\usepackage{url}

\newtheorem{theorem}{Theorem}[section]

\newtheorem{theorem-def}[theorem]{Theorem-definition}
\newtheorem{proposition}[theorem]{Proposition}
\newtheorem{corollary}[theorem]{Corollary}
\newtheorem{lemma}[theorem]{Lemma}

\newtheorem{conjecture}[theorem]{Conjecture}

\theoremstyle{definition}
\newtheorem{definition}[theorem]{Definition}
\newtheorem{example}[theorem]{Example}
\newtheorem{remark}[theorem]{Remark}

\usepackage{tikz-cd}

\newcommand{\N}{\mathbb N}
\newcommand{\Z}{\mathbb Z}

\newcommand{\F}{\mathbb F}

\newcommand{\alg}{\mathrm{alg}}
\newcommand{\carac}{\mathrm{char}}
\newcommand{\Mod}{\mathrm{Mod}}
\newcommand{\Tor}{\mathrm{Tor}}
\newcommand{\Ext}{\mathrm{Ext}}
\newcommand{\Hom}{\mathrm{Hom}}
\newcommand{\gldim}{\mathrm{gldim}}
\newcommand{\pd}{\mathrm{pd}}


\title{A Survey on Han's Conjecture} 
\author{Guilherme da Costa Cruz 
\\
Instituto de Matem\'atica e Estat\'istica -- Universidade de S\~ao Paulo
  \\
  {guilhermecc@usp.br}
}
\date{}

\begin{document}

\maketitle

\begin{abstract}
  In 1989, D. Happel pointed out for a possible connection between the global dimension of a finite-dimensional algebra and its Hochschild cohomology: is it true that the vanishing of Hochschild cohomology higher groups is sufficient to deduce that the global dimension is finite? After the discovery of a counterexample, Y. Han proposed, in 2006, to reformulate this question to homology. In this survey, after introducing the concepts and results involved, I present the efforts made until now towards the comprehension of Han's conjecture; which includes: examples of algebras that have been proven to satisfy it and extensions that preserve it.
  
  \textbf{Keywords:} Hochschild homology; global dimension; Han's conjecture; homology of associative algebras; finite-dimensional algebras.
  
  \textbf{MSC2020:} 16E40, 16-02
\end{abstract}

\section{Introduction}

Cohomology of associative algebras was introduced by G. Hochschild in 1945 \cite{hoch_1945}; just after the same had been made for groups by S. Eilenberg and S. Mac Lane; and some years before cohomology of Lie algebras was brought in by C. Chevalley and S. Eilenberg. After some years,  all of these theories were brought together with a unified approach in H. Cartan and S. Eilenberg's book ``Homological Algebra'', published in 1956 \cite{CartanEilenberg}. This could only be done with a good deal of abstraction -- which was carried out in parallel to the development of Category Theory -- and with the introduction of derived functors. In this manner, Hochschild's cohomology received a new definition through the functor `$\Ext$', and homology was defined dually using `$\Tor$'.

Another important notion introduced in the book was that of projective and global dimension for modules and rings. During the decade of the 1950s, a great deal of research was made in order to understand these concepts and how properties of rings could be understood through them. This led to some significant rewards: for instance, after works of M. Auslander, D. Buchsbaum and J.-P. Serre, many problems concerning regular rings -- which play a fundamental role in Algebraic Geometry -- could be solved. Also concerning homological dimensions, H. Bass published, in 1960, what is now probably the oldest unsolved problem in Homological Algebra: the finitistic dimension conjecture.

By the beginning of 1980's, P. Gabriel had already given a concrete framework for the study of finite-dimensional algebras: quivers (i.e. oriented graphs). He proved that every finite-dimensional algebra could be associated -- without great loss to the study of its modules -- to a quotient of some quiver algebra. Possibly pushed by these results, some interest has risen towards the computation of Hochschild (co)homology for these algebras. This was made clear in an influential paper by D. Happel \cite{happel_89}, in which important previous examples of C. Cibils were also surveyed.

The focus of the present survey resides essentially in a observation made in Happel's article \cite[1.4]{happel_89}: if an algebra has finite global dimension, then it can be proved that its Hochschild cohomology vanishes for higher degrees; what about the converse? An answer to it was given only in 2005, when Buchweitz et al. \cite{happel_falso} published a counterexample. In the meantime, important research was made concerning also Hochschild homology: the vanishing of Hochschild homology was proved to characterize finitude of global dimension for commutative algebras; E. Sk\"{o}ldberg \cite{skoldberg_1999} gave computations for two important quotients of quiver algebras; and others also gave valuable contributions to the understanding of Cyclic Homology -- which is intrinsically related to Hochschild's. Taking all this into consideration, and also after noting that the above counterexample is well behaved when considering its homology, Y. Han \cite[3.4]{han} proposed to reformulate Happel's question to homology, i.e. he conjectured that an algebra has finite global dimension if, and only if, its Hochschild homology vanishes in higher degrees. 

This is where the present survey begins. 

Our main objective is to give a good account on the partial answers already given to Han's conjecture. While some of them can even be deduced from results prior to Han's statement, others were motivated especially by it. This is presented in section \ref{sec:answers}.
To do so, we firstly give a succinct presentation of the notions of global dimension and Hochschild (co)homology of algebras in the preliminaries section \ref{sec:intro}. Afterward, in section \ref{sec:statement}, we establish crucial results providing a proper motivation to the precise statement of Han's conjecture. These are done for arbitrary algebras over a perfect field, in a slight contrast with Han's paper, which is focused in quotient of path algebras. At the final section \ref{sec:fim}, we conclude the paper by making some comments on possible future steps for research. Throughout the paper, we also try to show some subtle aspects in which homology differs from cohomology -- what makes Han's question indeed distinct from Happel's.

In this manner, I hope to provide a clear picture of this topic of research as it is today. This was not done having in mind the specialist solely, in such a way that the beginning graduate student should also feel encouraged to read it -- and invited to the research on the subject. With this in mind, I did not refrain from including references when presenting either a concept that asks for a better introduction or an argument that requires basic results from rings, modules and homology. That said, an acquaintance with some concepts of the theory are desired, such as: simple and semisimple modules; projective and injective modules; complexes and exact sequences; categories and functors; path algebras.

\vspace{0.2cm}
\noindent \textbf{Notation and Terminology:}
Throughout this paper, by an algebra we mean an unital associative algebra over a field. In order to aid the exposition, the reader may also assume that all algebras are noetherian. The word ''two-sided`` will be omitted when talking about two-sided noetherian or artinian algebras, or about two-sided ideals. In addition, the following notations will be used:
\begin{itemize}
    \item $k$ for an arbitrary field;
    \item $k^{\alg}$ for the algebraic closure of $k$;
    \item $A$ and $B$ for $k$-algebras;
    \item $J(A)$ for the Jacobson radical of $A$;
    \item $\otimes$ for the tensor product over $k$, i.e. $\otimes=\otimes_k$;
    \item $A$-$\Mod$ (resp. $\Mod$-$A$) for the category of left (resp. right) $A$-modules
    \item $A^{op}$ for the opposite algebra, i.e. $A$ with multiplication in reverse order.
\end{itemize}

\section{Homology of Associative Algebras}
\label{sec:intro}

We begin by defining the notion of global dimension. As we will see in the example below, one may see it intuitively as a measure on how far an algebra is from being semisimple. For a better understanding on the concept and how it can be used to derive properties of an algebra, I recommend \cite[Sections 4.1-4.4]{weibel_1994}. 

\begin{definition}
Given an $A$-module $M$, its \textit{projective dimension} $\pd_A(M)$ is defined as the minimum $n\in \N$ such that $M$ has a projective resolution of lenght $n$, i.e. an exact sequence
\[0 \rightarrow P_n \rightarrow \hdots \rightarrow P_0 \rightarrow M \rightarrow 0
\]
where each $P_i$ is a projective module. If such a finite resolution does not exist, we write $\pd_A(M)=\infty$. The \textit{global dimension} of $A$ is defined as 
\[
\gldim(A)\vcentcolon =\sup\{\pd_A(M)\mid M\in A\textrm{-Mod}\}.
\]
\end{definition}

\begin{remark}
For a more precise definition, it would be necessary to distinguish the left and right global dimensions of $A$, given when we consider the supremum either over $A$-$\Mod$ or over $\Mod$-$A$. However, as shown by M. Auslander \cite[Corollary 5]{auslander_1955}, they both coincide when $A$ is noetherian.
\end{remark}

\begin{example}

\begin{enumerate}
    \item An algebra $A$ is semisimple if, and only if, every left (or right) $A$-module is projective (see \cite[2.8]{lam_fc}), so $A$ is semisimple precisely when $\gldim(A)=0$.

    \item An algebra $A$ satisfying $\gldim(A)\leqslant 1$ is called \textit{hereditary}. One of the most important examples of these are quiver algebras $kQ$ (also known as path algebras). If its quiver $Q$ does not have oriented cycles, then $kQ$ is finite-dimensional and, in that case, it can be proved that the quotient algebra $kQ/I$ has finite global dimension for any ideal $I$ of $kQ$, cf \cite[Corollary 6]{eilenberg_nagao_nakayama_1956}. For an introduction to path algebras, I refer to \cite[Chapter II]{assem_skowronski_simson_2006}.
    
    \item Noetherian self-injective algebras (also known as quasi-Frobenius) have global dimension equal to zero or to infinity, see \cite[Exercise 4.2.2]{weibel_1994}. This class of algebras contains every Frobenius algebra $A$, i.e finite-dimensional algebras satisfying $A\cong \Hom_k(A,k)$ as $A$-modules, and every symmetric algebra, i.e. the ones satisfying $A\cong \Hom_k(A,k)$ as $A$-bimodules. For a good account on these, I reccomend \cite[Chapter 6]{lam_lec}.

    \item Given a finite-dimensional Lie algebra $\mathfrak{g}$ over a field $k$, the global dimension of its universal enveloping algebra $U\mathfrak{g}$ satisfies
    \[ \gldim(U\mathfrak{g})=\pd_{U\mathfrak{g}}(k)=\dim_k(\mathfrak{g}),
    \]
    cf. \cite[Ex. 7.3.5, Applicaton 7.7.4]{weibel_1994}.
\end{enumerate}
\label{exe:self}
\end{example}

Now, we will give the definition of Hochschild (co)homology in terms of $\Ext$ and $\Tor$ functors. For that, the reader should be aware that an $A$-bimodule $M$ may be considered, equivalently, as a left or right $(A\otimes A^{op})$-module by the following identities:
\[
a\otimes a')\cdot m= ama'=(m\cdot(a'\otimes a),\,\, a,a'\in A, m\in M
\]

\begin{definition}\cite[IX:\S 4]{CartanEilenberg}
The \textit{Hochschild homology groups} (of degree $n$) of an algebra $A$ with respect to a $A$-bimodule $M$ are defined as
\[HH_n(A,M)=\Tor_n^{A\otimes A^{op}}(M,A),\,\,\, n\in\N\]
Its \textit{Hochschild cohomology groups} (of degree $n$) are given by
\[HH^n(A,M)=\Ext^n_{A\otimes A^{op}}(A,M),\,\,\, n\in\N
\]
We shall use the notation $HH_n(A)$ and $HH^n(A)$ for the case $M=A$.
\end{definition}

One can note that each of the abelian groups $HH_n(A,M)$ and $HH^n(A,M)$ have also the structure of a $k$-vector space induced by $A$ and $M$. 

A good introduction to this homological theory is given in \cite{kassel}. For a more thorough study, I refer to \cite[Chapter 9]{weibel_1994} and \cite{loday}. However, we also try to give some intuitions on how this (co)homology behaves. Firstly, we note that the zero degree groups satisfies the following isomorphims:
\[HH_0(A)\cong A/[A,A] \,\,\,\,\,\,\,\,\, 
HH^0(A)=Z(A),
\]
where $Z(A)$ denotes the center of $A$ and $[A,A]=\langle ab -ba \mid a,b\in A \rangle$ is the commutator subspace of $A$. Therefore, zero degree (co)homology measures the commutativity of $A$.

\begin{example}
\begin{enumerate}
    \item If $A=k$, then for any $k$-bimodule $M$ (i.e. a vector space) the (co)homology is trivial:
    \[ HH^n(k,M) \cong HH_n(k,M)\cong
    \begin{cases}k,\,\,\, n=0\\
    0,\,\,\, n> 0
    \end{cases}
    \]
    \item (Truncated polynomial algebras, \cite[5.9]{kassel}) Let $A=k[x]/(p)$ for a polynomial $p$, then the homology groups $HH_n(A)$ are given by the homology of the complex
    \[\hdots \xrightarrow{p'\cdot } A \xrightarrow{0} A \xrightarrow{p'\cdot } A \xrightarrow{0} A \rightarrow 0,
    \]
    where $p'\cdot$ represents the map of multiplication by the (formal) derivative $p'$. We also have $HH_n(A)\cong HH^n(A)$, since $A$ is symmetric \cite[3.15A, 16.55]{lam_lec} (see item \ref{item:symmetric} of proposition below).
\end{enumerate}
\label{exe:polynomials}
\end{example}

Now, we summarize some of the main properties of Hochschild (co)homology.

\begin{proposition} \label{prop:symmetric}
Given algebras $A$ and $B$, we have for each $n\in\N$ that:
\begin{enumerate}
    \item $HH_n(A\times B)\cong HH_n(A)\oplus HH_n(B)$
        
    \item (Change of the ground field) 
    Given an extension of fields $\ell\subseteq k$, the $\ell$-algebra $A_{\ell}=A\otimes\ell$ satisfies $HH_n(A_{\ell})\cong HH_n(A)\otimes\ell$.
    
    \item $HH_{n}(A\otimes B)\cong \bigoplus_{i+j=n} HH_{i}(A)\otimes HH_{j}(B)$
    
    \item \label{prop:morita}
    If $A$ and $B$ are Morita-equivalent (i.e. $A$-$\Mod$ is equivalent to $B$-$\Mod$), then $HH_n(A)\cong HH_n(B)$ .
    
    \item  \label{item:symmetric}
    If $A$ is a finite-dimensional symmetric algebra\footnote{Not to be confused with the symmetric algebra $\textrm{Sym}(V)$ given by a vector space $V$, which is isomorphic to $k[x_1,\hdots,x_n]$ if $\dim_k(V)=n$. Even so, the proposition is unintentionally also valid for these algebras, see \cite[Exercise 9.1.3]{weibel_1994}.}, then $HH_n(A)\cong HH^n(A)$.
\end{enumerate}
Properties $1.$ to $4.$ are also valid for the cohomology groups $HH^{n}(A)$ with the following additional hypothesis for property $3.$: $A$ or $B$ need to be finite-dimensional. 
\end{proposition}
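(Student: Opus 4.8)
The plan is to reduce every item to a computation with the standard (bar) complex $C_\bullet(A)$, whose $n$-th term is $A^{\otimes(n+1)}$ and whose homology is $HH_n(A)$; the favourable feature throughout is that we work over a field $k$, so every $k$-module is flat and the algebraic Künneth theorem degenerates (no $\Tor$ correction terms survive).

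For item 1, I would expand $(A\times B)^{\otimes(n+1)}$ into its homogeneous summands and observe that the Hochschild differential, which multiplies adjacent tensor factors, kills any summand mixing $A$ and $B$ (since $A\cdot B=0$ in $A\times B$); the mixed part forms an acyclic subcomplex, leaving exactly $C_\bullet(A)\oplus C_\bullet(B)$ in homology. For item 2, tensoring $C_\bullet(A)$ with the field extension yields the standard complex of $A_\ell$, and since the extension is flat over the base field (being a vector space over it), $\otimes\ell$ commutes with passage to homology, giving $HH_n(A_\ell)\cong HH_n(A)\otimes\ell$. For item 3, the enveloping algebra factors as $(A\otimes B)^e\cong A^e\otimes B^e$, so the tensor product $P_\bullet\otimes Q_\bullet$ of projective bimodule resolutions of $A$ and $B$ is a projective resolution of $A\otimes B$; applying the Künneth theorem over the field $k$ produces the graded decomposition $\bigoplus_{i+j=n}HH_i(A)\otimes HH_j(B)$. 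The cohomological versions of items 1--3 run identically, with the caveat in item 3 that $\Hom$ commutes with the relevant tensor products only when one factor is finite-dimensional --- which is exactly the extra hypothesis recorded in the statement.

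Items 4 and 5 are the genuinely structural ones. For Morita invariance (item 4), I would first settle the model case $B=M_r(A)$ by constructing the generalized trace map between the standard complexes and exhibiting an explicit contracting homotopy showing it is a quasi-isomorphism; a general Morita equivalence is obtained by presenting $B$ as a full corner $eM_r(A)e$ of a matrix algebra, for which the same trace argument applies, so $HH_\bullet$ is constant along the equivalence. For item 5, I would use the defining bimodule isomorphism $A\cong\Hom_k(A,k)$ of a symmetric algebra to replace the coefficients, and then invoke the duality $\Ext^n_{A^e}(A,\Hom_k(A,k))\cong\Hom_k(\Tor^{A^e}_n(A,A),k)$, valid because $A$ is finite-dimensional; this reads as $HH^n(A)\cong \Hom_k(HH_n(A),k)$, and finite-dimensionality of $HH_n(A)$ turns the dual into an abstract vector-space isomorphism $HH_n(A)\cong HH^n(A)$.

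The main obstacle I expect is item 4: unlike the other parts, Morita invariance does not follow from a formal manipulation of the bar complex, and the honest work lies in verifying that the trace map is a quasi-isomorphism (equivalently, that the Morita bimodules induce mutually inverse maps on $HH_\bullet$) together with the reduction of an arbitrary equivalence to the matrix-and-idempotent case. Item 5 is subtler only in its bookkeeping of the finite-dimensionality hypotheses, which must be tracked carefully to justify both the duality isomorphism and the final self-duality of $HH_n(A)$.
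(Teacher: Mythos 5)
Your proposal is correct in substance, but it takes a genuinely different route from the paper. The paper disposes of items 1, 3 and 4 purely by citation (Weibel's and Loday's books for 1 and 3, Weibel and Benson for Morita invariance), and gives actual arguments only for item 2 (base change via Weibel's Theorem 9.1.7 plus exactness of $-\otimes\ell$, which is essentially your argument at the chain level) and item 5 (the same duality computation you propose). You instead sketch self-contained chain-level proofs: the decomposition of the bar complex of $A\times B$, the K\"unneth argument built on $(A\otimes B)\otimes(A\otimes B)^{op}\cong (A\otimes A^{op})\otimes(B\otimes B^{op})$, and the trace-map/full-corner proof of Morita invariance. What your approach buys is transparency about where the hypotheses enter --- the ground field makes K\"unneth degenerate, finite-dimensionality is needed in the cohomological item 3 and in item 5 --- and it correctly isolates item 4 as the only part requiring genuine work, which the survey simply outsources to references.

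Two steps need tightening before the plan is a proof. In item 1, the differential does \emph{not} kill mixed summands: for instance $d(a\otimes a'\otimes b)=aa'\otimes b\neq 0$. What is true is that the mixed summands form a subcomplex (each face either keeps a tensor mixed or sends it to zero because $A\cdot B=0$), and the acyclicity of that subcomplex still has to be proved; the cleanest way is to note that $(A\times B)\otimes(A\times B)^{op}$ is a product of the four algebras $A\otimes A^{op}$, $A\otimes B^{op}$, $B\otimes A^{op}$, $B\otimes B^{op}$, over which the bimodule $A\times B$ decomposes as $A\oplus 0\oplus 0\oplus B$, so the two mixed $\Tor$ summands vanish --- this is in effect the cited proof. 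In item 4, presenting $B$ as a full corner $eM_r(A)e$ is fine, but ``the same trace argument'' does not literally apply to the corner: one must use fullness, i.e. write $1=\sum_i x_iey_i$, to construct the homotopy inverse to the map induced by the inclusion $eM_r(A)e\hookrightarrow M_r(A)$; without that separate argument the step is circular, since the corner and the ambient algebra being Morita equivalent is exactly what is being exploited. Finally, a small misattribution in item 5: the duality $\Ext^n_{A\otimes A^{op}}(A,\Hom_k(A,k))\cong\Hom_k\bigl(\Tor^{A\otimes A^{op}}_n(A,A),k\bigr)$ holds over any field, by adjunction and exactness of $\Hom_k(-,k)$; finite-dimensionality of $A$ is needed only at the very end, to identify the finite-dimensional space $HH_n(A)$ with its dual, exactly as in the paper's proof.
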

\begin{proof}
    \begin{enumerate}
        \item \cite[Theorem 9.1.8]{weibel_1994}
        
        \item This follows from the following identities:
        \[HH_n(A_{\ell}, A\otimes\ell)\cong HH_n(A, A\otimes\ell)\cong HH_n(A,A)\otimes\ell,
        \]
        where \cite[Theorem 9.1.7]{weibel_1994} was used for the first equality, and that $(-\otimes\ell)$ is an exact functor for the second one, see \cite[Ex. 2.4.2]{weibel_1994}. The same works for cohomology.
        
        \item \cite[Proposition 9.4.1]{weibel_1994} or \cite[4.2.5]{loday}. 
        
        \item \cite[Theorem 9.5.6]{weibel_1994} and \cite[Theorem 2.11.1]{benson_II}. 
        
        \item A symmetric algebra $A$ is characterized by the property $A\cong\Hom_k(A,k)$ as $A$-bimodules. Hence,
        \[HH^n(A)=\Ext^n_{A\otimes A^{op}}(A,A)\cong
        \Ext^n_{A\otimes A^{op}}(A,\Hom_k(A,k))
        \]
        Using \cite[Proposition 2.8.5]{benson_I} and that $k$ is $k$-injective, we deduce that 
        \[HH^n(A)\cong\Hom_k(\Tor^n_{A\otimes A^{op}}(A,A),k)
        =\Hom_k(HH_n(A),k).
        \]
        So, the cohomology groups are the dual spaces of homology ones. Therefore, they are isomorphic when $A$ is finite-dimensional. 
    \end{enumerate}
    \vspace{-14pt}
\end{proof}

\begin{remark}
It is worth mentioning that a generalization of item \ref{prop:morita} was proved by D. Happel in the framework of finite-dimensional algebras \cite[4.2]{happel_89} -- namely, that cohomology of $A$ and $B$ are equal if $B$ is ``tiltable'' to $A$. This was shown in a more general setting (including any algebra over a field) by J. Rickard \cite[Proposition 2.5]{rickard}, soon after giving a more profound characterization on the tiltable property for any rings \cite[Theorem 1.1]{rickard}. In more detail, he proved that $B$ is tiltable to $A$ if, and only if, its derived categories are equivalent -- and in that case we say that $A$ and $B$ are \textit{derived equivalent}. Similarly, it can be proved that Hochschild homology is invariant by derived equivalences, cf. \cite[Theorem 2.2]{keller1996under}.
\end{remark}

The following example shows how these properties may be valuable in order to calculate Hochschild (co)homology of an algebra.

\begin{example}
Given a finite-dimensional semisimple algebra $A$ over an algebraically closed field $k$, we know by the Wedderburn-Artin theorem that 
\[A\cong \bigoplus_{i=1}^m M_{n_i}(k)
\]
for some $n_i, m\in\N$. So, using properties 1 and \ref{prop:morita} and that $M_n(k)$ is Morita-equivalent to $k$, we may conclude that
\[HH_n(A)\cong HH^n(A) \cong
\begin{cases}k^{m}, n=0\\
    0, n> 0
    \end{cases}.
\]
\end{example}

Now, we give an important representative for the Morita-equivalence class of an algebra.

\begin{theorem}
Assume $k$ is an algebraically closed field, then every finite-dimensional algebra is Morita-equivalent to an (admissible) quotient of a path algebra $kQ/I$.
\label{teo:Gabriel}
\end{theorem}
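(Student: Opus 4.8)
The plan is to prove this in two movements: first replace $A$ by a Morita-equivalent \emph{basic} algebra $B$, and then realize $B$ itself as an admissible quotient $kQ/I$ by reading a quiver off its radical and constructing a surjection $kQ \to B$ whose kernel is admissible. Throughout, the hypothesis that $k$ is algebraically closed enters precisely to force every finite-dimensional division algebra over $k$ to be $k$ itself.

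For the Morita reduction, I would take a complete set of primitive orthogonal idempotents $e_1,\dots,e_n$ summing to $1$, which exists since $A$ is finite-dimensional, and use the Krull--Schmidt decomposition $A=\bigoplus_i Ae_i$ into indecomposable projectives. Choosing one idempotent from each isomorphism class of the $Ae_i$ and letting $e$ be their sum produces $B:=eAe$. The functor $\Hom_A(Ae,-)\cong e(-)\colon A\text{-}\Mod \to B\text{-}\Mod$ is then an equivalence because $Ae$ is a projective generator and $\mathrm{End}_A(Ae)^{op}\cong B$. By construction $B$ is basic, and since $k$ is algebraically closed we get $B/J(B)\cong k^m$, where $m$ is the number of isomorphism classes of simple $A$-modules.

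Next I would build the quiver $Q$: take $m$ vertices, one per primitive idempotent $\bar e_1,\dots,\bar e_m$ of $B$, and put $\dim_k \bar e_j\,(J/J^2)\,\bar e_i$ arrows from $i$ to $j$, where $J:=J(B)$. Since $k$ is perfect, the Wedderburn--Malcev theorem supplies a semisimple subalgebra $S\subseteq B$ with $B=S\oplus J$ and $S\cong B/J$, furnishing orthogonal idempotents lifting the vertices. Sending each vertex to its idempotent and each arrow $i\to j$ to a chosen lift in $\bar e_j J\bar e_i$ of a basis element of $\bar e_j\,(J/J^2)\,\bar e_i$ extends uniquely to an algebra homomorphism $\phi\colon kQ\to B$. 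I expect surjectivity to be the crux: one must show that $J$ is generated as a two-sided ideal by $J/J^2$, which rests on $J$ being nilpotent, so that $\phi$ hits $S$, then $J/J^2$, then each higher layer $J^t/J^{t+1}$ by induction.

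Finally I would verify that $I:=\ker\phi$ is admissible, that is $R^N\subseteq I\subseteq R^2$ for the arrow ideal $R\subseteq kQ$ and some $N$. The inclusion $I\subseteq R^2$ holds because the arrows were selected to map to a $k$-basis of $J/J^2$, so no nontrivial combination of paths of length at most one lies in the kernel; the inclusion $R^N\subseteq I$ holds because $\phi(R)\subseteq J$ and $J^N=0$ for some $N$ by nilpotency, whence $\phi(R^N)\subseteq J^N=0$. The main obstacle is thus the surjectivity argument together with the bookkeeping guaranteeing that the chosen lifts of the arrows stay linearly independent modulo $J^2$; once these are secured, $kQ/I\cong B$ is Morita-equivalent to $A$, completing the proof.
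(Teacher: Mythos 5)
Your proposal is correct and takes essentially the same route as the paper: the paper's own proof splits the statement into exactly your two steps --- Morita reduction to a basic algebra $eAe$, and Gabriel's theorem that a basic algebra over an algebraically closed field is isomorphic to an admissible quotient $kQ/I$ --- citing references (Lam 18.37 for the first, Gabriel via Assem--Simson--Skowro\'nski for the second) where you instead write out the standard arguments. Your details (the progenerator $Ae$ with $\mathrm{End}_A(Ae)^{op}\cong eAe$, the quiver read off from $\bar e_j (J/J^2)\bar e_i$, the Wedderburn--Malcev lifting, surjectivity of $kQ\to B$ via nilpotency of $J$, and admissibility of the kernel) are precisely the proofs behind those citations, so there is no substantive difference in approach.
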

\vspace{-8pt}
\begin{proof}[Comments on the proof]
This follows from two facts: 
\begin{itemize}
    \item Every finite-dimensional algebra is Morita-equivalent to a basic algebra \cite[18.37]{lam_lec}.
    \item Every basic algebra is isomorphic to an admissible quotient of a path algebra.
\end{itemize}
Over an algebraically closed field, the second item is a well-known result of P. Gabriel, see \cite[section II.3]{assem_skowronski_simson_2006}. 
A similar result may also be proved if one considers, more generally, perfect fields (definition \ref{def:perfect}). An outline for the proof can be found in \cite[Corollary 4.1.11]{benson_I} and, for a more detailed approach, see \cite[Theorem 3.12]{Gabriel_perfeito}, where the proofs are carried out by using the notion of species.
\end{proof}

For this reason, when studying Hochschild (co)homology of finite-dimensional algebras, not much generality is lost if one considers just quotients of path algebras -- and that is what many authors do (e.g. D. Happel and Y. Han).

Now, we mention two properties that are valid exclusively for homology.

\begin{proposition}\label{prop:corner}
For each $n\in\N$, we have that:
\begin{enumerate}
    \item $HH_n(-)\colon \mathrm{Alg}_k\to \mathrm{Vect}_k$ is a functor from the category of $k$-algebras to the category of $k$-vector spaces.
    
    \item Given algebras $A$ and $B$ and a $A$-$B$-bimodule $M$,
\[HH_n(\begin{bmatrix}
A & M\\
0 & B
\end{bmatrix}) \cong HH_n(A) \oplus HH_n(B).
\]
\end{enumerate}
\end{proposition}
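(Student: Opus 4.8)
The plan is to carry out both assertions at the level of the standard (bar) Hochschild chain complex $C_\bullet(A)$, with $C_n(A)=A^{\otimes(n+1)}$ and the usual cyclic-bar differential $b$, which computes $\Tor_\bullet^{A\otimes A^{op}}(A,A)=HH_\bullet(A)$. For the first item, given a $k$-algebra homomorphism $f\colon A\to B$, I would define $C_\bullet(f)$ to be $f^{\otimes(n+1)}$ in degree $n$. The only point to verify is that this commutes with $b$, which is immediate: every face of $b$ is assembled from the algebra multiplication together with the cyclic permutation, and $f$ respects both the multiplication and the unit. Passing to homology yields $HH_n(f)$, and functoriality ($HH_n(\mathrm{id})=\mathrm{id}$ and $HH_n(g\circ f)=HH_n(g)\circ HH_n(f)$) is inherited verbatim from the evident functoriality of $(-)^{\otimes(n+1)}$. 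I would close this item with a remark explaining why the analogous statement fails for cohomology: the cochain groups $\Hom_k(A^{\otimes n},A)$ are contravariant in the source tensor power but covariant in the target copy of $A$, so an algebra map induces no natural arrow in either direction.

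For the second item, write $T=\begin{bmatrix}A&M\\0&B\end{bmatrix}$ and first record the $k$-vector space decomposition $T=A\oplus M\oplus B$ coming from the orthogonal idempotents $e_1,e_2$, together with the multiplication table: the only nonzero products among the three pieces are $A\cdot A\subseteq A$, $B\cdot B\subseteq B$, $A\cdot M\subseteq M$ and $M\cdot B\subseteq M$; in particular $M\cdot M=0$ and $B\cdot A=0$. The key elementary observation is that $b$ preserves the number of tensor factors lying in $M$: merging two adjacent factors via a surviving product never changes this count (for instance $A\cdot M$ and $M\cdot B$ each produce exactly one factor in $M$), while the forbidden products simply annihilate the term. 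Consequently $C_\bullet(T)$ splits as a direct sum $\bigoplus_{p\geqslant 0}C^{(p)}_\bullet$, where $C^{(p)}$ is spanned by the tensors having exactly $p$ factors in $M$.

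Two of these summands are easy. The piece $C^{(0)}_\bullet$, consisting of the tensors with no factor in $M$, is exactly the Hochschild complex of $A\oplus B$ with the componentwise multiplication (the cross terms $A\cdot B$ and $B\cdot A$ vanish both in $T$ and in $A\times B$); hence $C^{(0)}_\bullet\cong C_\bullet(A\times B)$ and, by item $1$ of Proposition~\ref{prop:symmetric}, $H_\bullet(C^{(0)})\cong HH_\bullet(A)\oplus HH_\bullet(B)$. It therefore remains to show that $C^{(p)}_\bullet$ is acyclic for every $p\geqslant 1$, and this is the crux of the proof. Here I would exploit the directedness $e_2Te_1=0$ together with $M\cdot M=0$: any factor from $M=e_1Te_2$ must be followed (cyclically) by a factor starting at $e_2$, and the only such factors live in $B=e_2Te_2$, which in turn can only be followed by further factors from $B$. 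Thus, once a chain enters the \textit{index-$2$} world it can never return to index $1$, so no tensor containing an $M$ is cyclically composable, and the cyclic-bar differential can never close such a loop.

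To convert this observation into an actual vanishing I would pass to the normalized complex relative to the separable subalgebra $E=ke_1\oplus ke_2$; since $E$ is separable the relative and absolute theories coincide, and in the relative complex the tensor products are taken over $E$, which enforces composability and makes every $M$-containing chain literally zero by the argument above. (Alternatively, one can build an explicit contracting homotopy on each $C^{(p)}_\bullet$ by contracting along the forced $B$-tail that follows an $M$.) The main obstacle is precisely this acyclicity step — the bookkeeping showing that the $M$-part carries no homology — for which the relative/normalized reduction is the cleanest device. Granting it, assembling the three summands gives $HH_n(T)\cong HH_n(A)\oplus HH_n(B)$, as desired.
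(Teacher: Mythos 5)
Your proposal is correct, but note that the survey does not actually prove this proposition: its ``proof'' consists solely of the citations \cite[1.1.4]{loday} and \cite[1.2.15]{loday}, so your argument is a genuinely self-contained substitute for a delegation to the literature. For item 1 your chain-level construction is the standard one and is fine (over a field the bar complex does compute $\Tor^{A\otimes A^{op}}_\bullet(A,A)$, since the bar resolution is automatically projective). For item 2, your route --- the Peirce decomposition $T=A\oplus M\oplus B$, the observation that the Hochschild boundary preserves the number of tensor factors in $M$, the identification of the $M$-free part with $C_\bullet(A\times B)$ (whence $HH_\bullet(A)\oplus HH_\bullet(B)$ by item 1 of Proposition \ref{prop:symmetric}), and the vanishing of $M$-containing chains after passing to the complex relative to the separable subalgebra $E=ke_1\oplus ke_2$ --- is the technique familiar from Cibils' quiver-algebra computations, and it is sound: since $e_2Te_1=0$ and $M=e_1Te_2$, a cyclically composable word containing an $M$-factor would have to return from vertex $2$ to vertex $1$, which is impossible, so such chains are literally zero in the $E$-relative complex, while separability of $E$ guarantees that this complex still computes absolute homology (the $E$-relative bar resolution is an honest projective resolution of $T$ over $T\otimes T^{op}$ because $E$-relative projectives are projective when $E$ is separable; this standard fact, going back to Gerstenhaber and Schack, should be stated with a justification or reference rather than only asserted). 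One caution about the logical structure: the relative-complex step proves the theorem outright, namely $HH_n(T)\cong H_n(C^E_\bullet(T))\cong HH_n(A)\oplus HH_n(B)$, so your $M$-count decomposition ends up being motivation rather than a load-bearing step. If you do want the stronger statement that each $C^{(p)}_\bullet$ with $p\geqslant 1$ is acyclic, deduce it from the fact that the quasi-isomorphism $C_\bullet(T)\to C^E_\bullet(T)$ annihilates these summands while being injective on homology; do not argue by cancelling summands in an abstract isomorphism $HH_n(A)\oplus HH_n(B)\oplus\bigoplus_{p\geqslant 1}H_n(C^{(p)}_\bullet)\cong HH_n(A)\oplus HH_n(B)$, since $X\oplus Y\cong X$ does not force $Y=0$ for infinite-dimensional vector spaces.
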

\begin{proof}
\cite[1.1.4]{loday} and \cite[1.2.15]{loday}
\end{proof}
\begin{remark}
The first property is not valid, for example, in zero degree cohomology: the center $Z(-)$ is not a functor.
\end{remark}

As shown in the example above, Hochschild cohomology of matrix algebras over $k$ vanishes for every $n$ and every bimodule. In what follows, we provide some characterizations of algebras satisfying this property.

\begin{theorem-def}
We say that an algebra $A$ is \textit{separable} if it satisfies the following equivalent conditions:
\begin{enumerate}
    \item $HH^i(A,M)=0$ for every $i>0$ and every $A$-bimodule $M$.  \label{sep_1}
    \item $A\otimes A^{op}$ is semisimple.\label{sep_2}
    \item $A$ is finite-dimensional and $A\otimes \ell$ is semisimple for every field extension $\ell\supseteq k$.\label{sep_3}
    \item $A$ is finite-dimensional and $A\otimes k^{\alg}$ is semisimple.\label{sep_4}
\end{enumerate}
\label{teo:separable}
\end{theorem-def}
\vspace{-10pt}
\begin{proof}[Comments on the proof]
The equivalence $\ref{sep_1}\Leftrightarrow \ref{sep_3}$ was already proved in G. Hochschild's 1945 paper \cite[Theorem 4.1]{hoch_1945}, showing how his cohomology can be a useful tool to understand properties of associative algebras. More modern proofs may be found in \cite[IX: Theorems 7.9, 7.10]{CartanEilenberg} and in \cite[Theorem 9.2.11]{weibel_1994}.
\end{proof}
\vspace{-8pt}

As it can be seen by the characterization \ref{sep_3}, every separable algebra is semisimple. So, one may ask when the converse holds. As we will show below, the answer is to consider perfect fields. This good behaviour is one of the main reasons that many of the results in the next section will be formulated over fields of this class, which is not a small one: it includes fields that are either finite, algebraically closed or of characteristic zero.

\begin{definition}
A field $k$ is said to be \textit{perfect} if every finite (or algebraic) extension of $k$ is separable.
\label{def:perfect}
\end{definition}

We recall that an algebraic extension $\ell\supset k$ is \textit{separable} if, and only if, for every $\alpha\in \ell$ the derivative of the minimal polynomial of $\alpha$ over $k$ is non-zero. This is consistent with the above notion of separable algebras: a finite extension $\ell\supset k$ is separable if, and only if, $\ell$ is a separable $k$-algebra \cite[9.2.8]{weibel_1994}.

\begin{proposition}
A field $k$ is perfect if, and only if, every finite-dimensional semisimple $k$-algebra is separable.
\label{prop:perfect}
\end{proposition}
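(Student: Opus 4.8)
The plan is to prove the two implications separately, using throughout the characterization of separability by $A\otimes k^{\alg}$ being semisimple (condition \ref{sep_4} of Theorem-definition \ref{teo:separable}) together with the recalled correspondence between separability of a finite field extension $\ell\supset k$ and separability of $\ell$ regarded as a $k$-algebra. I expect the implication ``every finite-dimensional semisimple algebra is separable $\Rightarrow$ $k$ perfect'' to be the routine one, and its converse to carry the real content.

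For the direction ``(every finite-dimensional semisimple algebra is separable) $\Rightarrow$ $k$ perfect'', I would argue by contraposition. Assuming $k$ is not perfect, the definition provides a finite extension $\ell\supset k$ that is not separable. Then $\ell$, viewed as a $k$-algebra, is a field --- in particular simple, hence semisimple --- and finite-dimensional over $k$; yet by the stated correspondence it fails to be a separable $k$-algebra precisely because $\ell/k$ is an inseparable field extension. Thus $\ell$ is a finite-dimensional semisimple $k$-algebra that is not separable, which is exactly the contrapositive sought. This step is short and uses only the bridge between the two notions of separability.

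For the converse ``$k$ perfect $\Rightarrow$ (every finite-dimensional semisimple algebra is separable)'', I would take an arbitrary finite-dimensional semisimple $k$-algebra $A$ and, via the Wedderburn--Artin theorem, write $A\cong\prod_i M_{n_i}(D_i)$ with each $D_i$ a finite-dimensional division $k$-algebra. Since $(-\otimes k^{\alg})$ commutes with finite products and with $M_n(-)$, and since matrix algebras and finite products of semisimple algebras are semisimple, condition \ref{sep_4} reduces the claim to showing that each $D_i\otimes k^{\alg}$ is semisimple. Writing $L_i=Z(D_i)$ for the center, the extension $L_i/k$ is finite, hence separable because $k$ is perfect; separability yields $L_i\otimes k^{\alg}\cong (k^{\alg})^{[L_i:k]}$, a product of copies of $k^{\alg}$. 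On the other hand $D_i$ is central simple over $L_i$, so it splits over the algebraically closed field $k^{\alg}$, i.e.\ $D_i\otimes_{L_i}k^{\alg}\cong M_{m_i}(k^{\alg})$. Combining the two through $D_i\otimes k^{\alg}\cong D_i\otimes_{L_i}(L_i\otimes k^{\alg})$ gives a finite product of matrix algebras over $k^{\alg}$, which is semisimple, so $A\otimes k^{\alg}$ is semisimple and $A$ is separable.

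The hard part will be the treatment of the noncommutative factors $D_i$: it is precisely here that perfection of $k$ is indispensable, guaranteeing that the center $L_i$ stays a product of fields after extension to $k^{\alg}$ (over an imperfect field an inseparable center would introduce nilpotents and destroy semisimplicity), while the splitting of a central simple algebra over the algebraic closure of its center supplies the remaining input. Once these two facts are in hand the tensor-product bookkeeping is routine; I would only be careful to perform the splitting of $D_i$ as base change over $L_i$ rather than over $k$.
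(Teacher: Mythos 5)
Your proof is correct, but the forward direction takes a genuinely different route from the paper's. The paper handles ``$k$ perfect $\Rightarrow$ every finite-dimensional semisimple $k$-algebra is separable'' in one stroke, by citing the fact that $J(A\otimes\ell)=J(A)\otimes\ell$ for every separable algebraic extension $\ell\supseteq k$ \cite[5.17]{lam_fc}: perfection makes $k^{\alg}/k$ separable, so $J(A)=0$ forces $J(A\otimes k^{\alg})=0$, which is characterization \ref{sep_4} of Theorem-definition \ref{teo:separable}. You instead decompose $A$ by Wedderburn--Artin, reduce to the division algebra factors $D_i$, and split each one over $k^{\alg}$ through its center $L_i$: perfection of $k$ gives $L_i\otimes k^{\alg}\cong(k^{\alg})^{[L_i:k]}$, and since every central simple algebra over an algebraically closed field is a matrix algebra, each factor $D_i\otimes_{L_i,\sigma}k^{\alg}$ (one for each embedding $\sigma\colon L_i\hookrightarrow k^{\alg}$) is a matrix algebra, so $A\otimes k^{\alg}$ is semisimple. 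Both arguments are sound, and your contrapositive for the other direction coincides with the paper's. As for what each route buys: the paper's is a two-line deduction, but all the content is concentrated in the cited radical-and-base-change theorem; yours is longer and leans on the theory of central simple algebras (triviality of the Brauer group of $k^{\alg}$), but it exposes precisely where perfection is used --- namely on the centers $L_i$ of the division algebra factors, exactly the locus where an imperfect field would produce nilpotents after base change. Your bookkeeping $D_i\otimes k^{\alg}\cong D_i\otimes_{L_i}(L_i\otimes k^{\alg})$, with the splitting performed factorwise over the product $L_i\otimes k^{\alg}\cong\prod_{\sigma}k^{\alg}$, is the right way to make this precise.
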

\vspace{-8pt}
\begin{proof}
A finite-dimensional algebra $A$ is semisimple if, and only if, $J(A)=0$. Furthermore, $J(A\otimes\ell)=J(A)\otimes\ell$ for every separable algebraic extension $\ell\supseteq k$, cf. \cite[5.17]{lam_fc}. So, if $k$ is a perfect field, we have that $J(A\otimes k^{\alg})=0$ for every semisimple algebra $A$. The converse follows immediately from the definition: if $k$ is not perfect, then there exists a field $\ell$ which is finite-dimensional over $k$ and is not separable.
\end{proof}

\section{Statement of Han's conjecture}
\label{sec:statement}

In this section, restricting ourselves to finite-dimensional algebras $A$, we will show that, if $\gldim(A)$ is finite, then its Hochschild homology is concentrated solely in degree zero. In this manner, we will get a legitimate motivation for the statement of Han's conjecture. Before that, we will prove a more elementary result, and which is also valid for cohomology. 

In what follows, we will use the following standard notation:

\begin{definition}
The \textit{Hochschild homological} (resp. \textit{cohomological}) \textit{dimension} of an algebra $A$ is defined as 
\begin{align*}
    \mathrm{hh.dim}(A)\vcentcolon =& \sup\{n\in\N\mid HH_n(A)\neq 0\}\\
    \mathrm{hch.dim}(A)\vcentcolon =& \sup\{n\in\N\mid HH^n(A)\neq 0\}.
\end{align*}
If, by any chance, $HH_n(A)=0$ (resp. $HH^n(A)=0$) for all $n$, we settle, as a convention, that $\mathrm{hh.dim}(A)=0$ (resp. $\mathrm{hch.dim}(A)=0$).
\end{definition}

In the following results, we will assume that $A/J(A)$ is separable, which is always true when $k$ is a perfect field. Indeed, this follows by proposition \ref{prop:perfect} and the fact that $A/J(A)$ is semisimple.

\begin{proposition}\label{prop: hoch_finita}
If $A$ is a finite-dimensional algebra such that $A/J(A)$ is separable (e.g. $k$ is a perfect field), then:
\begin{enumerate}
    \item $\gldim(A\otimes A^{op})=2\cdot\gldim(A)$.
    \item $\gldim(A\otimes\ell)=\gldim(A)$ for every field extension $\ell\supseteq k$.
\end{enumerate}
\end{proposition}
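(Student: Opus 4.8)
The plan is to obtain both identities from the single principle that, over a field, global dimension is additive under tensor products. Three ingredients will carry the argument. First, since $A$ is finite-dimensional, both $A\otimes A^{op}$ (as a $k$-algebra) and $A\otimes\ell$ (as an $\ell$-algebra) are finite-dimensional, so I may invoke the classical fact (Auslander) that the global dimension of a finite-dimensional algebra $R$ is attained on its semisimple quotient, namely $\gldim(R)=\pd_R(R/J(R))$. Second, because $k$ is a field, tensoring over $k$ is exact: if $P_\bullet\to M$ and $Q_\bullet\to N$ are projective resolutions over $A$ and $B$, then the total complex of $P_\bullet\otimes Q_\bullet$ is a projective $A\otimes B$-resolution of $M\otimes N$, giving $\pd_{A\otimes B}(M\otimes N)\le \pd_A(M)+\pd_B(N)$, and dually a Künneth identity for $\Tor^{A\otimes B}$. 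Third, and most importantly, I will compute the Jacobson radical of the tensor product; this is the only place the separability hypothesis is used.

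For item 2, the algebra $A\otimes\ell$ is finite-dimensional over $\ell$. Separability of $A/J(A)$ says precisely (condition \ref{sep_3} of Theorem-definition \ref{teo:separable}) that $(A/J(A))\otimes\ell$ is semisimple; since $J(A)\otimes\ell$ is nilpotent, it follows that $J(A\otimes\ell)=J(A)\otimes\ell$ and hence $(A\otimes\ell)/J(A\otimes\ell)\cong(A/J(A))\otimes\ell$. Now tensor a minimal projective resolution of $A/J(A)$ over $A$ with $\ell$: the result is a projective resolution of $(A/J(A))\otimes\ell$ whose differentials have entries in $J(A)\otimes\ell=J(A\otimes\ell)$, so it is again minimal and of the same length. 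Applying the Auslander criterion on both ends gives $\gldim(A\otimes\ell)=\pd_A(A/J(A))=\gldim(A)$.

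For item 1, I first prove $\gldim(A\otimes B)=\gldim(A)+\gldim(B)$ for any finite-dimensional $B$ with $A/J(A)$ separable, and then set $B=A^{op}$. For the upper bound I claim $J(A\otimes B)=J(A)\otimes B+A\otimes J(B)$: the quotient by this ideal is $(A/J(A))\otimes(B/J(B))$, which is semisimple because the tensor product of the separable algebra $A/J(A)$ with the semisimple algebra $B/J(B)$ is semisimple, while the ideal itself is nilpotent. The Auslander criterion and the total-complex resolution then give $\gldim(A\otimes B)=\pd_{A\otimes B}\left((A/J(A))\otimes(B/J(B))\right)\le\pd_A(A/J(A))+\pd_B(B/J(B))=\gldim(A)+\gldim(B)$. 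For the matching lower bound I select simple modules $S$ over $A$ and $T$ over $B$ attaining $\pd_A(S)=\gldim(A)=:p$ and $\pd_B(T)=\gldim(B)=:q$, pick $X,Y$ with $\Tor^A_p(X,S)\ne0$ and $\Tor^B_q(Y,T)\ne0$, and use the Künneth isomorphism $\Tor^{A\otimes B}_{p+q}(X\otimes Y,\,S\otimes T)\cong\Tor^A_p(X,S)\otimes\Tor^B_q(Y,T)$, which is nonzero as a tensor product of nonzero $k$-spaces; hence $\pd_{A\otimes B}(S\otimes T)\ge p+q$. Specializing to $B=A^{op}$ and using $\gldim(A^{op})=\gldim(A)$ -- left and right global dimensions coincide for noetherian algebras, as recalled after the definition of global dimension -- yields $\gldim(A\otimes A^{op})=2\gldim(A)$.

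The main obstacle is exactly the radical computation, which is where separability is indispensable: for an inseparable extension $\ell/k$ already $\ell\otimes_k\ell$ carries nilpotents, so $(A/J(A))\otimes\ell$ need not be semisimple and the identifications of $(A\otimes B)/J(A\otimes B)$ with $(A/J(A))\otimes(B/J(B))$ collapse; everything else is formal Künneth bookkeeping over $k$. One should also dispatch the case of infinite global dimension separately, but it is immediate: the upper bounds become vacuous, and if $\gldim(A)=\infty$ then some simple module has infinite projective dimension, so the lower-bound computation forces $\pd_{A\otimes B}(S\otimes T)=\infty$ and both sides agree.
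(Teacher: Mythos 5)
Your proof is correct, but it takes a genuinely different route from the paper's. The paper's proof is essentially a citation: it notes that separability of $\overline{A}=A/J(A)$ makes $\overline{A}\otimes\overline{A}^{op}$ and $\overline{A}\otimes\ell$ semisimple (via Theorem-definition \ref{teo:separable}, items \ref{sep_2} and \ref{sep_3}), and then invokes Auslander's Theorem 16 in \cite{auslander_1955} for both dimension formulas. What you have done is, in effect, reprove Auslander's theorem: the radical identifications $J(A\otimes\ell)=J(A)\otimes\ell$ and $J(A\otimes B)=J(A)\otimes B+A\otimes J(B)$ (nilpotent ideal with semisimple quotient), the reduction $\gldim(R)=\pd_R(R/J(R))$, the K\"unneth lower bound and the total-complex upper bound are exactly the content the paper outsources. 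The steps check out: the tensored minimal resolution in item 2 stays minimal because its differentials land in $J(A\otimes\ell)$ times the terms, so its length is preserved in both the finite and infinite cases; in item 1 a witness with $\Tor^A_p(X,S)\neq 0$ exists concretely (take $X=A/J(A)$ as a right module and use minimality); and the degenerate infinite cases are dispatched correctly. Two small points worth recording: (i) your claim that separable tensor semisimple is semisimple is true but slightly stronger than what Theorem-definition \ref{teo:separable} states; for the two instances you actually need ($B=A^{op}$ and base change to $\ell$) it is precisely items \ref{sep_2} and \ref{sep_3}, so no gap results; (ii) the K\"unneth formula in degree $p+q$ is a priori a direct sum over $i+j=p+q$, but the cross terms vanish since $\Tor^A_i(-,S)=0$ for $i>p$ and $\Tor^B_j(-,T)=0$ for $j>q$, so your displayed isomorphism is correct as stated. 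As for what each approach buys: the paper's is economical and appropriate for a survey, while yours is self-contained, makes visible exactly where separability enters (only in the radical computations), and establishes the more general additivity $\gldim(A\otimes B)=\gldim(A)+\gldim(B)$ along the way.
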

\vspace{-6pt}
\begin{proof}
Using the notation $\overline{A}=A/J(A)$, it follows from \ref{teo:separable} that:
\begin{enumerate}
    \item $\overline{A}\otimes \overline{A^{op}}$ is semisimple;
    \item $\overline{A}\otimes\ell$ is semisimple for every field extension $\ell\supseteq k$.
\end{enumerate}
In this way, the proposition follows from a result of Auslander \cite[Theorem 16]{auslander_1955}.
\end{proof}


From the definition of $\Ext$ and $\Tor$ functors, it is possible to conclude that
\[\mathrm{hh.dim(A)},\mathrm{hch.dim(A)} \leqslant \pd_{A\otimes A^{op}}(A) \leqslant \gldim(A\otimes A^{op}).
\]
In this manner, we obtain the following consequence from the first item\footnote{We could also use the following result: $\pd_{A\otimes A^{op}}(A) = \gldim(A)$, see \cite[\S 4]{eilenberg_nagao_nakayama_1956}.}:

\begin{corollary}
Every finite-dimensional algebra $A$ such that $A/J(A)$ is separable (e.g. $k$ is a perfect field) satisfies: 
\[\gldim(A)< \infty \implies \mathrm{hh.dim}(A)< \infty,\,\, \mathrm{hch.dim}(A)< \infty.
\]
\label{coro: hoch_finita}
\end{corollary}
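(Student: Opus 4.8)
The plan is to chain together the two inequalities that have already been assembled in the excerpt. The key observation is that Corollary~\ref{coro: hoch_finita} follows almost immediately from Proposition~\ref{prop: hoch_finita} once we know the displayed bound
\[
\mathrm{hh.dim}(A),\mathrm{hch.dim}(A) \leqslant \pd_{A\otimes A^{op}}(A) \leqslant \gldim(A\otimes A^{op}),
\]
so the entire argument is a matter of tracing how finiteness propagates along these estimates.

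First I would justify the displayed bound itself, since it is the crux. By definition $HH_n(A)=\Tor_n^{A\otimes A^{op}}(A,A)$ and $HH^n(A)=\Ext^n_{A\otimes A^{op}}(A,A)$. If $\pd_{A\otimes A^{op}}(A)=d<\infty$, then $A$ admits a projective resolution over $A\otimes A^{op}$ of length $d$; applying $(-\otimes_{A\otimes A^{op}}A)$ or $\Hom_{A\otimes A^{op}}(-,A)$ to this resolution produces complexes concentrated in degrees $0,\dots,d$, so both $\Tor_n$ and $\Ext^n$ vanish for $n>d$. This yields $\mathrm{hh.dim}(A),\mathrm{hch.dim}(A)\leqslant \pd_{A\otimes A^{op}}(A)$. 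The second inequality is just the elementary fact that the projective dimension of a single module never exceeds the global dimension of the ring it lives over, which is immediate from the definition of $\gldim$ as a supremum.

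Next I would invoke the first item of Proposition~\ref{prop: hoch_finita}, which gives $\gldim(A\otimes A^{op})=2\cdot\gldim(A)$ under the hypothesis that $A/J(A)$ is separable. Assuming $\gldim(A)<\infty$, we therefore get $\gldim(A\otimes A^{op})<\infty$, and feeding this through the displayed chain forces both $\mathrm{hh.dim}(A)$ and $\mathrm{hch.dim}(A)$ to be finite. This completes the implication. The convention that the dimension is $0$ when all groups vanish causes no trouble, since a finite value is exactly what is being asserted.

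The only genuine obstacle is the displayed bound, and even that is routine once the functorial definitions are unwound; the truly substantive input is Auslander's theorem, which is already imported through Proposition~\ref{prop: hoch_finita}. I would therefore expect the proof to be a short paragraph that simply cites the proposition and remarks that the chain of inequalities does the rest. An alternative, even more direct route is noted in the footnote of the excerpt: using the identity $\pd_{A\otimes A^{op}}(A)=\gldim(A)$, one bypasses the doubling and concludes finiteness of the Hochschild dimensions directly from $\gldim(A)<\infty$, without needing the separability hypothesis for this half of the implication at all.
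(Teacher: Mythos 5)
Your core argument is correct and is essentially the paper's own proof: the paper establishes the corollary exactly by combining the displayed chain of inequalities (justified, as you do, from the definitions of $\Ext$ and $\Tor$) with item 1 of Proposition \ref{prop: hoch_finita}, so that $\gldim(A)<\infty$ forces $\gldim(A\otimes A^{op})=2\cdot\gldim(A)<\infty$ and hence finiteness of both Hochschild dimensions.

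Your closing claim, however, is wrong: the alternative route via the footnote's identity $\pd_{A\otimes A^{op}}(A)=\gldim(A)$ does \emph{not} dispense with the separability hypothesis. That identity of Eilenberg--Nagao--Nakayama is itself only valid under the assumption that $A/J(A)$ is separable; in general one only has the inequality $\gldim(A)\leqslant \pd_{A\otimes A^{op}}(A)$, and the gap can be infinite. Indeed, the corollary as a whole genuinely fails without the hypothesis, as the paper's remark immediately following it shows: if $k$ is not perfect and $\alpha\in k^{\alg}\setminus k$ is inseparable, then $k(\alpha)=k[x]/(m_{\alpha})$ is a finite-dimensional $k$-algebra which is a field, so $\gldim(k(\alpha))=0$, yet $HH_n(k(\alpha))\cong HH^n(k(\alpha))\cong k(\alpha)\neq 0$ for all $n$, so that $\mathrm{hh.dim}$ and $\mathrm{hch.dim}$ are infinite and, in particular, $\pd_{A\otimes A^{op}}(A)=\infty$. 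Consequently no proof of this implication can avoid separability; in the footnote's route it is simply hidden inside the cited identity rather than inside Auslander's theorem.
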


\begin{remark}
In the results above, the hypothesis over the field is truly necessary: if $k$ is not a perfect field, then it has a non-separable element $\alpha\in k^{\alg}\setminus k$, so that its minimal polynomial $m_{\alpha}$ has zero derivative. Therefore, $k(\alpha)=k[x]/(m_{\alpha})$ is a finite-dimensional $k$-algebra with $\gldim(k(\alpha))=0$ (since it is a field) whose Hochschild (co)homology is, by example \ref{exe:polynomials}, always non-zero:
\[HH^n(k(\alpha))\cong HH_n(k(\alpha))\cong k(\alpha)
\]
for every $n\geqslant 0$. For a concrete example, one can take $k=\mathbb{F}_p(t)$ and $\alpha=\sqrt[p]{t}$ for some prime $p$, so that $m_{\alpha}=x^p -t$.
\end{remark}

Now, we will see that we have a much stronger result for the homological dimension, which is essentially a consequence of the following result by B. Keller.

\begin{lemma} \textup{\cite[2.5]{KELLER1998223}}
Suppose $A$ is a finite-dimensional algebra such that $\overline{A}=A/J(A)$ is a product of copies of $k$ and $\Hom_A(S,S)\cong k$ for each simple $A$-module $S$.\footnote{The second assumption can be proved to be superfluous, see \cite[4.8, 7.7]{lam_fc}} If $A$ has finite global dimension, then we have an isomorphism (induced by the inclusion $\overline{A}\hookrightarrow A$) of the cyclic homology groups $HC_n(\overline{A})\cong HC_n(A)$ for every $n\geqslant 0$.
\label{lemma:keller}
\end{lemma}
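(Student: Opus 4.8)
The plan is to treat separately the two features that the hypotheses supply — the nilpotency of the radical and the finiteness of the global dimension — and to isolate precisely the point where Keller's machinery becomes indispensable. Since $\overline{A}=A/J(A)$ is a product of copies of $k$, it is separable, so the Wedderburn--Malcev theorem furnishes a subalgebra section $\iota\colon\overline{A}\hookrightarrow A$ of the canonical projection $\pi\colon A\twoheadrightarrow\overline{A}$, with $\pi\circ\iota=\mathrm{id}_{\overline{A}}$; this is the inclusion in the statement. As cyclic homology is functorial on algebras (just as $HH_n$ is in Proposition \ref{prop:corner}, cf. \cite{loday}), applying $HC_n$ to $\pi\circ\iota=\mathrm{id}$ shows that $HC_n(\iota)$ is a split monomorphism for every $n$. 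Hence the entire content of the lemma reduces to the surjectivity of $HC_n(\iota)$, i.e. to showing that the radical $J=J(A)$ contributes nothing to cyclic homology.

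First I would exploit nilpotency. Since $A$ is finite-dimensional, $J$ is nilpotent, so $\pi\colon A\to\overline{A}$ is a nilpotent extension. By Goodwillie's theorem \cite{loday}, such an extension induces an isomorphism on periodic cyclic homology; consequently $HP_n(\iota)\colon HP_n(\overline{A})\xrightarrow{\sim}HP_n(A)$ for every $n$. Crucially, this step makes no use of the global dimension: it controls the ``stable'', $2$-periodic part of the theory for free.

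Next I would invoke finite global dimension to compare ordinary cyclic homology with its periodic counterpart. By Proposition \ref{prop: hoch_finita} we have $\gldim(A\otimes A^{op})=2\,\gldim(A)<\infty$, hence $\pd_{A\otimes A^{op}}(A)<\infty$, and by Corollary \ref{coro: hoch_finita} this gives $\mathrm{hh.dim}(A)<\infty$; thus $HH_n(A)=0$ for all $n\gg 0$, while $HH_n(\overline{A})=0$ for $n>0$ because $\overline{A}$ is semisimple. Feeding this into Connes' periodicity exact sequence
\[
\cdots\to HH_n(A)\xrightarrow{I}HC_n(A)\xrightarrow{S}HC_{n-2}(A)\xrightarrow{B}HH_{n-1}(A)\to\cdots
\]
(see \cite{loday}), the vanishing of two consecutive Hochschild groups forces the periodicity operator $S$ to be an isomorphism in all sufficiently high degrees. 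The $S$-tower therefore stabilizes, giving $HC_n\cong HP_n$ for $n\gg 0$, both for $A$ and for $\overline{A}$; combined with Goodwillie's isomorphism this already yields $HC_n(\iota)\colon HC_n(\overline{A})\cong HC_n(A)$ in every sufficiently high degree.

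The main obstacle is the descent to the remaining low and intermediate degrees, and this is where I expect the real difficulty to lie. A purely formal attempt — chasing the ladder of Connes sequences for $\overline{A}$ and $A$ with the five lemma — stalls precisely in the range $0<n\le\mathrm{hh.dim}(A)$, since each such step would demand the isomorphism $HH_n(\overline{A})\cong HH_n(A)$, i.e. the vanishing $HH_n(A)=0$ in all positive degrees; but that is exactly the conclusion (Han's statement) one is ultimately after, and it is strictly stronger than mere finiteness of $\mathrm{hh.dim}$. To break this circularity one passes to the mixed-complex formalism: the lemma is equivalent to $\iota$ inducing a quasi-isomorphism of the mixed complexes computing cyclic homology, and Keller establishes this directly \cite{KELLER1998223}, drawing on the derived-categorical input that finite global dimension provides — the simple modules, and hence $\overline{A}$, are perfect objects generating $D^b(A)$ — so that the comparison is carried out once and for all rather than degree by degree. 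Reconstructing this mixed-complex quasi-isomorphism is the technical heart of the statement and the step I expect to be the genuine obstruction.
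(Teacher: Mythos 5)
Your proposal contains two genuine gaps, one of which is fatal to the stated generality. The appeal to Goodwillie's theorem is a characteristic-zero result: nilpotent invariance of periodic cyclic homology ($HP_*(A)\cong HP_*(A/I)$ for $I$ a nilpotent ideal) is proved for $\Q$-algebras and is known to fail in positive characteristic (already for truncated polynomial algebras such as $k[x]/(x^2)$ with $\carac(k)=2$, whose Hochschild homology is nonzero in every degree by example \ref{exe:polynomials}, the periodic theory does not reduce to that of $k$). The lemma, however, is stated over an arbitrary field, and the paper applies it precisely over perfect fields of any characteristic: the proof of Keller's theorem passes to $A\otimes k^{\alg}$ with $k=\F_p$ explicitly allowed. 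So the step you describe as coming ``for free'' is unavailable in exactly the cases the paper needs.

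Second, even granting characteristic zero, your argument only makes $HC_n(\iota)$ an isomorphism for $n\gg 0$, and you explicitly defer the remaining range $0<n\leqslant \mathrm{hh.dim}(A)$ to Keller's mixed-complex argument. That low-degree range is not a residual technicality but the entire content of the lemma for the purposes of the paper: the subsequent theorem feeds the isomorphism \emph{in all degrees} into lemma \ref{lema:hc=hh} to conclude $HH_n(A)=0$ for all $n>0$, whereas an isomorphism in high degrees only recovers $\mathrm{hh.dim}(A)<\infty$, which is already corollary \ref{coro: hoch_finita}. So what you have written is a characteristic-zero reduction plus a citation of the very result being proved, not a proof. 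Your preliminary observations are sound --- the Wedderburn--Malcev splitting does produce the inclusion and makes $HC_n(\iota)$ a split monomorphism, and the stabilization of the $S$-tower under finite Hochschild dimension is correct --- but note that the paper itself treats the lemma as a black box, citing \cite[2.5]{KELLER1998223}; the derived-categorical core you flag as ``the genuine obstruction'' is indeed the theorem, and it must be carried out, as Keller does, uniformly in all degrees and in all characteristics.
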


The reader not acquainted with Cyclic Homology should not be alarmed by its use in the formulation of the above. The cyclic homology groups have an intrinsic relation with Hochschild ones, given by the so-called Connes' long exact sequence:
\[\hdots \rightarrow HC_{n+1}(A)\rightarrow HC_{n-1}(A)
\rightarrow HH_n(A)\rightarrow HC_n(A)\rightarrow HC_{n-2}(A)
\rightarrow \hdots
\]
For instance, when $n=0$, we have the isomorphism $HC_0(A)\cong HH_0(A)$. Furthermore, as we note below, we could have replaced $HC$ by $HH$ when writing the lemma.

\begin{lemma}\textup{\cite[$2.2.3$]{loday}}
    Let $f\colon A\to A'$ be a morphism of $k$-algebras.
    \[
    f \text{ gives the isomorphism } HH_*(A)\cong HH_*(A')
    \iff 
    f \text{ gives the  } HC_*(A)\cong HC_*(A')
    \]
    \label{lema:hc=hh}
    \vspace{-0.8cm}
\end{lemma}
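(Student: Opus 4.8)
The plan is to deduce both implications from the naturality of Connes' long exact sequence together with the five lemma. The morphism $f\colon A\to A'$ induces, degreewise, maps $f_*\colon HH_n(A)\to HH_n(A')$ and $f_*\colon HC_n(A)\to HC_n(A')$. Since Connes' sequence arises functorially in the algebra (from a morphism of the underlying mixed complexes, equivalently of the associated bicomplexes), $f$ yields a commutative ladder between the SBI sequence of $A$ and that of $A'$, i.e. a genuine morphism of long exact sequences commuting with the structure maps $S$, $B$ and $I$. Establishing this naturality cleanly — pinning down the construction of the sequence as functorial so that $f$ really commutes with all three maps — is the step I expect to be the main obstacle, since everything afterward is purely formal.

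For the implication ``$HC_*$ iso $\Rightarrow$ $HH_*$ iso'' I would argue directly, without induction. Fixing $n$, I read off from Connes' sequence the five consecutive terms
\[
HC_{n+1}(A)\to HC_{n-1}(A)\to HH_n(A)\to HC_n(A)\to HC_{n-2}(A),
\]
together with the analogous row for $A'$ and the vertical maps induced by $f$. Under the hypothesis, all four vertical maps on the cyclic terms $HC_{n+1}$, $HC_{n-1}$, $HC_n$, $HC_{n-2}$ are isomorphisms, so the five lemma forces $f_*\colon HH_n(A)\to HH_n(A')$ to be an isomorphism as well.

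For the converse ``$HH_*$ iso $\Rightarrow$ $HC_*$ iso'' I would proceed by induction on $n$. The base case uses $HC_0(A)\cong HH_0(A)$ (together with $HC_{-1}=HC_{-2}=0$), so that $f_*$ is an isomorphism on $HC_0$ because it is on $HH_0$. For the inductive step I would center the five-term window at $HC_n(A)$:
\[
HC_{n-1}(A)\to HH_n(A)\to HC_n(A)\to HC_{n-2}(A)\to HH_{n-1}(A).
\]
Here the outer cyclic terms $HC_{n-1}$ and $HC_{n-2}$ carry isomorphisms by the inductive hypothesis, while $HH_n$ and $HH_{n-1}$ carry isomorphisms by assumption; the five lemma then yields that $f_*$ is an isomorphism on $HC_n$. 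Combining the two implications gives the stated equivalence. The only genuine care required, beyond the naturality set-up, is the index bookkeeping: each appeal to the five lemma must have its four flanking maps already known to be isomorphisms, which is why one direction is immediate and the other requires climbing the degrees inductively from the base case.
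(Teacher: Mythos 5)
Your proof is correct. The paper gives no argument of its own for this lemma---it simply defers to the cited reference (Loday, Prop.~2.2.3)---and your argument, namely naturality of the SBI sequence, the five lemma applied to the window centered at $HH_n(A)$ for one implication, and the induction starting from the natural isomorphism $HC_0\cong HH_0$ (with $HC_{-1}=HC_{-2}=0$) for the converse, is exactly the standard proof found in that reference, so it matches the intended argument.
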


From these results, we obtain the following synthesis:

\begin{theorem}[Keller]
Every finite-dimensional algebra $A$ such that $A/J(A)$ is separable (e.g. $k$ is a perfect field) satisfies: 
\[\gldim(A)< \infty \implies \mathrm{hh.dim}(A)=0.
\]
\end{theorem}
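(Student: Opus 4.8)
The plan is to deduce the result from Keller's Lemma \ref{lemma:keller} together with Lemma \ref{lema:hc=hh}, after performing two reductions that transport an arbitrary such $A$ into the rather rigid situation demanded by \ref{lemma:keller}, namely that $\overline{A}=A/J(A)$ be a finite product of copies of the ground field.

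First, I would reduce to the case of an algebraically closed field. Since $\overline{A}$ is separable, Theorem-definition \ref{teo:separable} ensures that $\overline{A}\otimes k^{\alg}$ is semisimple. Because $J(A)$ is a nilpotent ideal and $(-\otimes k^{\alg})$ is exact, $J(A)\otimes k^{\alg}$ is a nilpotent ideal of $A\otimes k^{\alg}$ with semisimple quotient $\overline{A}\otimes k^{\alg}$; hence $J(A\otimes k^{\alg})=J(A)\otimes k^{\alg}$ and $(A\otimes k^{\alg})/J(A\otimes k^{\alg})\cong\overline{A}\otimes k^{\alg}$ is semisimple, so the separability hypothesis persists over $k^{\alg}$. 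By Proposition \ref{prop: hoch_finita} (item 2) we have $\gldim(A\otimes k^{\alg})=\gldim(A)<\infty$, and by the change-of-ground-field isomorphism (Proposition \ref{prop:symmetric}, item 2) we have $HH_n(A\otimes k^{\alg})\cong HH_n(A)\otimes k^{\alg}$. As $V\otimes_k k^{\alg}=0$ forces $V=0$ for any $k$-vector space $V$, it follows that $HH_n(A)=0$ precisely when $HH_n(A\otimes k^{\alg})=0$. Thus I may assume from now on that $k$ is algebraically closed.

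Next, I would reduce to a basic algebra. By the first ingredient in the proof of Theorem \ref{teo:Gabriel}, $A$ is Morita-equivalent to a basic algebra $B$. Morita equivalence preserves finiteness of the global dimension and, by Proposition \ref{prop:morita}, preserves every group $HH_n$; hence it suffices to show $HH_n(B)=0$ for $n>0$. Over the algebraically closed field $k$, a basic algebra satisfies $\overline{B}=B/J(B)\cong k^{m}$ for some $m$, so $B$ fulfils both hypotheses of Lemma \ref{lemma:keller}. Since $B$ has finite global dimension, that lemma yields isomorphisms $HC_n(\overline{B})\cong HC_n(B)$ for all $n$, induced by the embedding $\overline{B}\hookrightarrow B$ supplied by the Wedderburn--Malcev splitting. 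Lemma \ref{lema:hc=hh}, applied to this algebra morphism, upgrades them to $HH_n(\overline{B})\cong HH_n(B)$. Finally, from $\overline{B}\cong k^{m}$, Proposition \ref{prop:symmetric} (item 1) together with Example \ref{exe:polynomials} gives $HH_n(\overline{B})\cong\bigoplus_{i=1}^{m}HH_n(k)=0$ for every $n>0$. Therefore $HH_n(B)=0$ for $n>0$; unwinding the two reductions gives $HH_n(A)=0$ for all $n>0$, and since $HH_0(A)\cong A/[A,A]\neq 0$ this is exactly the assertion $\mathrm{hh.dim}(A)=0$.

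The substantive content is sealed inside Keller's Lemma \ref{lemma:keller}, which I treat as a black box. In the elementary part, the main obstacle is to guarantee that the two reductions land on an algebra meeting its hypotheses: the condition that $\overline{A}$ be a product of copies of $k$ is very restrictive -- it fails already for $M_2(k)$ and for inseparable extensions such as $\F_p(\sqrt[p]{t})\supset\F_p(t)$ -- so the base change to $k^{\alg}$ (which trivialises the division-algebra endomorphism rings) and the Morita reduction to a basic algebra (which removes the matrix blocks) are both indispensable, and the delicate point is precisely their compatibility with the invariance of $HH_n$ and $\gldim$ recorded in Propositions \ref{prop:symmetric} and \ref{prop: hoch_finita}.
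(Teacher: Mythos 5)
Your proposal is correct and follows essentially the same route as the paper's own proof: base change to $k^{\alg}$ (justified by the separability of $A/J(A)$, Proposition \ref{prop: hoch_finita} and item 2 of Proposition \ref{prop:symmetric}), Morita reduction to the associated basic algebra so that the hypotheses of Lemma \ref{lemma:keller} hold, and then Lemma \ref{lema:hc=hh} to convert the cyclic-homology isomorphism into one for Hochschild homology. The only differences are organizational: you state the two reductions up front and fill in details the paper leaves implicit, such as $J(A\otimes k^{\alg})=J(A)\otimes k^{\alg}$ and the faithful flatness argument that lets the vanishing descend from $A\otimes k^{\alg}$ to $A$.
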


\begin{proof}
We fix the notation $A_{k^{\alg}}=A\otimes k^{\alg}$. Using that $A/J(A)$ is separable, we deduce that $\overline{A_{k^{\alg}}}= A_{k^{\alg}}/J(A_{k^{\alg}}) $ is semisimple (over an algebraically closed field), so that it is isomorphic to a direct sum of matrix algebras $M_n(k^{\alg})$ by the Wedderburn-Artin theorem. Hence, the associated basic algebra $(A_{k^{\alg}})^b$, which is Morita-equivalent to $A_{k^{\alg}}$, satisfies the hypothesis of lemma \ref{lemma:keller}.

Now, from \ref{prop: hoch_finita}, we also know that $\gldim(A)=\gldim(A_{k^{\alg}})$. In this manner, applying both lemmas (and some Morita invariance), we get that:
\[\gldim(A)<\infty 
\implies 
HH_n(A_{k^{\alg}})\cong HH_n((A_{k^{\alg}})^b)\cong
HH_n\Big(\dfrac{(A_{k^{\alg}})^b}{J(A_{{k}^{\alg}})^b}\Big)
\]
Since the quotient is a product of copies of $k^{\alg}$, we can conclude that the latter is zero for every $n>0$. Finally, this is also valid for $HH_n(A)$, since
\[HH_n(A_{k^{\alg}}) \cong HH_n(A)\otimes {k^{\alg}}\,\,\, \textrm{ for all } n\geqslant 0
\]
by property 2 in proposition \ref{prop:symmetric}.
\end{proof}

Now, we finally formulate Han's conjecture for perfect fields, which is basically  the converse of the results proved above.

\begin{conjecture}[Han] If $A$ is a finite-dimensional algebra such that $A/J(A)$ is separable (e.g. $k$ is a perfect field), then the following are equivalent:
\begin{enumerate}
    \item $\mathrm{hh.dim}(A)<\infty $\label{item_1}
    \item $\mathrm{hh.dim}(A)=0$
    \item $\gldim(A)<\infty $\label{item_3}
\end{enumerate}
\end{conjecture}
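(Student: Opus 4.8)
The plan is to observe that two of the three implications are already in hand, so that the whole conjecture collapses onto a single implication. Indeed, the theorem of Keller just proved gives the implication from \ref{item_3} to the second item, since $\gldim(A)<\infty$ forces $\mathrm{hh.dim}(A)=0$; and the implication from the second item to \ref{item_1} is trivial, as $\mathrm{hh.dim}(A)=0$ certainly entails $\mathrm{hh.dim}(A)<\infty$. Hence everything rests on the converse \ref{item_1}$\Rightarrow$\ref{item_3}: one must show that the vanishing of Hochschild homology in all sufficiently high degrees forces the global dimension to be finite. This is the genuine content of Han's conjecture, and it is what I would try to establish.

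I would argue by contrapositive, assuming $\gldim(A)=\infty$ and attempting to produce infinitely many degrees $n$ with $HH_n(A)\neq 0$. By the footnoted identity $\pd_{A\otimes A^{op}}(A)=\gldim(A)$, the hypothesis says exactly that $A$, viewed as an $A$-bimodule, has infinite projective dimension over $A\otimes A^{op}$. The temptation is to conclude directly that $\Tor^{A\otimes A^{op}}_n(A,A)=HH_n(A)$ cannot vanish for large $n$; but this is precisely the step that fails at the level of pure homological algebra. The cohomological analogue $HH^n(A)=\Ext^n_{A\otimes A^{op}}(A,A)$ can vanish in high degrees even when $A$ has infinite bimodule projective dimension — this is the Buchweitz et al. counterexample to Happel's question. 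Consequently no argument depending solely on $\pd_{A\otimes A^{op}}(A)=\infty$ can succeed, and one is forced to exploit features special to homology.

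The features I would lean on are exactly those of proposition \ref{prop:corner}: the functoriality of $HH_n(-)$ and its additivity along triangular matrix algebras, neither of which has a cohomological counterpart in degree zero. Combined with the reductions already available — Morita invariance (\ref{prop:morita}), passage to the algebraic closure via $HH_n(A_{k^{\alg}})\cong HH_n(A)\otimes k^{\alg}$ together with the equality $\gldim(A)=\gldim(A_{k^{\alg}})$ from proposition \ref{prop: hoch_finita}, and Gabriel's theorem \ref{teo:Gabriel} reducing to an admissible quotient $kQ/I$ — this lets one work with a basic algebra over an algebraically closed field presented by a quiver with relations. The concrete goal would then be to filter $A$ (say by powers of the radical, or by a triangular decomposition splitting off a hereditary or separable part) so that the additivity and functoriality of $HH_n$ isolate a subquotient whose infinite global dimension is visibly detected by a nonvanishing, periodic family of homology classes — much as one proceeds for monomial or self-injective algebras, where explicit minimal resolutions are at hand.

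The hard part — and the reason the statement is a conjecture rather than a theorem — is precisely this last step: there is no known uniform mechanism converting infinite projective dimension of modules into nonvanishing Hochschild homology of $A$. Explicit minimal bimodule resolutions are understood only for special classes (monomial, Koszul, gradable, certain self-injective algebras), and outside them the homology is genuinely hard to compute; moreover, since the cohomological version is false, any correct proof must use the homological asymmetry in an essential way, which rules out the most natural formal arguments. I would therefore expect a full proof to proceed either by an inductive reduction along the algebra extensions that are already known to preserve the conjecture, or by a deep structural input — for instance from the representation theory of $A$, or from cyclic homology via lemma \ref{lema:hc=hh} — that forces periodicity of the homology whenever $\gldim(A)=\infty$. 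Supplying such an input in full generality is the true obstacle.
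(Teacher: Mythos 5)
Your analysis matches the paper exactly: the implication \ref{item_3}$\Rightarrow$2 is Keller's theorem proved just before the conjecture, 2$\Rightarrow$\ref{item_1} is trivial, and \ref{item_1}$\Rightarrow$\ref{item_3} is precisely the open implication that the paper isolates as ``Han's property'' and does not prove (nor does anyone — it is why this is a conjecture). You were right not to claim a proof of that implication, and your explanation of why purely formal arguments via $\pd_{A\otimes A^{op}}(A)$ must fail (the Buchweitz et al. cohomological counterexample) is the same reasoning the paper uses to motivate treating homology as genuinely different from cohomology.
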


Since implication \ref{item_1} $\Rightarrow$ \ref{item_3} is the only one that hasn't been proved yet, and it is also studied for algebras in general, we establish:

\begin{definition}
The implication $\mathrm{hh.dim}(A)<\infty\implies \gldim(A)<\infty$ is called \textit{Han's property}. The analogous statement for cohomology (i.e $\mathrm{hch.dim}(A)<\infty\implies \gldim(A)<\infty$) is called \textit{Happel's property}.
\end{definition}

As one might wonder, Keller theorem does not have an analogous for cohomology. In fact, using Happel's computation in \cite[1.6]{happel_89}, one can take even path algebras (over an algebraically closed field) as counterexamples: if $Q$ is a quiver (without oriented cycles) whose underlying graph is not a tree, then $HH^1(kQ)\neq 0$. For example, taking $Q_1= (1 \rightrightarrows 2)$ and $Q_2$ to be
\[
\begin{tikzcd}
& 1 \arrow[d] \arrow[dl] &   \\
2   \arrow[r] & 3 
\end{tikzcd}
\]
we get that $HH^{1}(kQ_1)\cong k^{3}$ and $HH^{1}(kQ_2)\cong k$.

\section{(Partial) Answers to Han's Conjecture}
\label{sec:answers}

\subsection{Algebras satisfying Han's or Happel's property}
\label{sec:examples}

We summarize in tables \ref{table:Han's examples} and \ref{table:Happel's examples} below, the classes of algebras which have been proven to satisfy, respectively, Han's and Happel's property. In what follows, some comments will be made in order to help with two types of difficulties when reading it. The first one is that many of the examples are not exactly well-know -- and some have been defined only in the reference paper -- so we provide the definitions for some of these cases. In a second aspect, it may not be clear the reason why Han's property follows from the theorems in the references, thus some clarifications are given in this direction.

As we will note below, two of these classes (group algebras and trivial extensions) are of symmetric algebras, so both properties are equivalent by proposition \ref{prop:symmetric}. With this in mind, even though they also satisfy Happel's property, we have recorded them only in Han's table.

\begin{table}[htb]
\centering
\renewcommand{\arraystretch}{1.4}
\begin{tabular}{|lll|} 
\hline
\multicolumn{1}{|l|}{\textbf{Class of algebras}}                                              & \multicolumn{1}{l|}{\textbf{Assumption over the field}}                      & \textbf{References}                      \\ 
\hline
group algebras                                                                                & -~                                                                            &
\cite{swan}, \cite[Thm I.1]{Burghelea1985}
             \\ 
\hline

\begin{tabular}[c]{@{}l@{}}  quotients of \vspace{-6pt}\\  acyclic quiver algebras\end{tabular}       & -~                                                                            &
\cite[Cor. 6]{eilenberg_nagao_nakayama_1956}, \cite{cibils_86}               \\ 
\hline
commutative                                                                          & -                                                                             & \cite{vigue_92}, \cite{buenos_aires}                    \\ 
\hline
exterior algebras                                                                             & -                                                                             & \cite[Theorem 2]{exterior}                      \\ 
\hline
monomial & -                                                                             & \cite[Theorem 3]{han}                             \\
\hline
quantum complete intersections      & -                                                & \cite[Theorem 3.1]{bergh2008quantum}                  \\ 
\hline
$N$-Koszul                                                                              & $\carac(k)=0$                                                                 & \cite[Theorem 4.5 ]{bergh2009hochschild}                       \\ 
\hline
\begin{tabular}[c]{@{}l@{}}homogeneous quotients of \vspace{-6pt} \\quiver algebras with loops\end{tabular} & $\carac(k)=0$                                                                 &\cite[Theorem 4.7]{bergh2009hochschild}                       \\ 
\hline
graded cellular                                                                               & $\carac(k)=0$                                                                 & \cite[Theorem 4.9 ]{bergh2009hochschild}                      \\ 
\hline
\begin{tabular}[c]{@{}l@{}}a generalization of quantum \vspace{-6pt} \\complete intersections\end{tabular}  & -                                                                             & \cite[Theorem I   ]{solotar2010two}             \\ 
\hline
\begin{tabular}[c]{@{}l@{}}  local graded algebras\vspace{-6pt}\\  with a certain relation\end{tabular} & -                                                  & \cite[Theorem II]{solotar2010two}                 \\ 
\hline
\begin{tabular}[c]{@{}l@{}}  quantum generalized \vspace{-6pt}\\  Weyl algebras\end{tabular}                                                             & $\carac(k)=0$                                                                 & 
\cite[1.1, 1.2, 3.3]{solotar_Weyl}
 \\ 
\hline
trivial extensions of local~                                                                  & algebraically closed                                                          & \cite[Theorem 3.2]{trivial_extension}
\\ 
\hline
\begin{tabular}[c]{@{}l@{}}  trivial extensions \vspace{-6pt}\\  of self-injective \end{tabular}  & algebraically closed                                             & \cite[Theorem 3.5]{trivial_extension}                              \\ 
\hline
trivial extensions of graded                                                                  & \begin{tabular}[c]{@{}l@{}}algebraically closed, \vspace{-6pt}\\ $\carac(k)=0$\end{tabular} & \cite[Theorem 3.9]{trivial_extension}                         \\
\hline
\end{tabular}
\vspace{0.4cm}
\caption{Known examples of algebras satisfying Han's property. With the exception of lines 3, 10 and 12, all of them are assumed to be finite-dimensional. The list is organized in chronological order of the references.}
\label{table:Han's examples}
\end{table}

\begin{table}[htb]
\centering
\renewcommand{\arraystretch}{1.4}
\begin{tabular}{|ll|} 
\hline
\multicolumn{1}{|l|}{\textbf{Class of algebras}} & \textbf{Reference}                                   \\ 
\hline
commutative                                      & \cite[Corollary]{Happel_comutativa}                   \\ 
\hline
exterior algebras          & \cite[Theorem 3]{exterior}        \\ 
\hline
truncated                & \cite[Theorem 3]{Happel_truncada}  \\ 
\hline
some quantum complete intersections      & \cite[Theorem 3.3]{bergh2008quantum}                 \\
\hline
 quantum generalized Weyl algebras & \cite[Theorems 1.1, 1.2, 3.3]{solotar_Weyl}  \\ 
\hline
\end{tabular}
\vspace{0.4cm}
\caption{Examples of algebras satisfying Happel's property. With the exception of the last class, all of them are assumed to be finite-dimensional over an arbitrary field. The list is organized in chronological order of the references.}
\label{table:Happel's examples}
\end{table}

\vspace{0.2cm} \noindent \textbf{Group Algebras:}
In this section, all groups are assumed to be finite.
The fact that every group algebra satisfies Han's property was not explicitly found in the literature. However, the following proof, which was essentially communicated by Eduardo N. Marcos, is easily deduced from somewhat well-know facts from Group (Co)Homology. 

First of all, one should be aware that every group algebra is symmetric \cite[16.56]{lam_lec}, so that its Hochschild homology and cohomology are isomorphic. Another important aspect is that its global dimension have only two possible values: zero or infinite. By Maschke's theorem, we know that a group algebra $kG$ has zero global dimension if, and only if, $\carac(k)$ does not divide the order of $G$. In this manner, the assertion that $kG$ satisfies Han's property is equivalent to the following:

\begin{theorem}
If $\carac(k)=p>0$ divides the order of a finite group $G$, then $\mathrm{hh.dim}(kG)=\infty$.
\label{teo:grupo}
\end{theorem}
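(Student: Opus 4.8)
The plan is to reduce the computation of $\mathrm{hh.dim}(kG)$ to ordinary group (co)homology and then to exhibit infinitely many nonzero degrees there. Concretely, I would invoke Burghelea's computation \cite[Thm I.1]{Burghelea1985} of the Hochschild homology of a group algebra, which gives a decomposition $HH_n(kG)\cong\bigoplus_{[c]}H_n(C_G(c);k)$ indexed by the conjugacy classes $[c]$ of $G$, where $C_G(c)$ denotes the centralizer. The summand attached to the identity class is $H_n(C_G(e);k)=H_n(G;k)$, so ordinary group homology with trivial coefficients is a direct summand of $HH_n(kG)$ for every $n$. (Equivalently, one may first pass to cohomology, since $kG$ is symmetric and hence $HH_n(kG)\cong HH^n(kG)$ by Proposition \ref{prop:symmetric}, item \ref{item:symmetric}.) It therefore suffices to prove that, when $p=\carac(k)$ divides $|G|$, one has $H_n(G;k)\neq 0$ for infinitely many $n$.

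The core of the matter is the case of a $p$-group, which I would treat by hand. First, the trivial module $k$ fails to be projective over $kG$ exactly when $p\mid|G|$: a splitting of the augmentation $\varepsilon\colon kG\to k$ would send $1$ to a $G$-invariant element, necessarily a scalar multiple of the norm $N=\sum_{g\in G}g$, and $\varepsilon(cN)=c\,|G|=0$ in $k$, so no splitting exists. Since $kG$ is self-injective (indeed symmetric), every module of finite projective dimension is projective, whence $\pd_{kG}(k)=\infty$ (this is the dichotomy recorded for self-injective algebras in Example \ref{exe:self}). When $G=P$ is a $p$-group, $kP$ is local with unique simple module $k$, so a minimal projective resolution $\cdots\to P_n\to\cdots\to P_0\to k\to 0$ has all differentials landing in the radical; applying $k\otimes_{kP}(-)$ annihilates these differentials and yields $H_n(P;k)=\Tor_n^{kP}(k,k)\cong\mathrm{top}(P_n)\neq 0$ for every $n$, since the resolution never terminates. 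This settles the $p$-group case.

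For arbitrary $G$ with $p\mid|G|$, the reduction to a Sylow $p$-subgroup $P$ is less immediate than one would hope, and this is the main obstacle: the transfer relation $\mathrm{cor}^G_P\circ\mathrm{res}^G_P=[G:P]\cdot\mathrm{id}$, with $[G:P]$ invertible in $k$, only shows that $H^*(G;k)$ is a direct \emph{summand} of $H^*(P;k)$, bounding the cohomology of $G$ from above rather than below, so the $p$-group computation does not propagate upward directly. What rescues the argument is that $H^*(G;k)$ is a finitely generated graded $k$-algebra whose Krull dimension equals the maximal rank of an elementary abelian $p$-subgroup of $G$; since $p\mid|G|$, Cauchy's theorem supplies an element of order $p$ and hence such a subgroup of rank at least one, so $H^*(G;k)$ has positive Krull dimension and is therefore nonzero in infinitely many degrees. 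I would cite this classical non-vanishing (Swan \cite{swan}) rather than reprove it. Combining it with the Burghelea summand gives $HH_n(kG)\neq 0$ for infinitely many $n$, that is, $\mathrm{hh.dim}(kG)=\infty$, as claimed.
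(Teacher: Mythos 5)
Your proposal is correct and follows essentially the same route as the paper: Burghelea's theorem realizes $H_n(G;k)$ as a direct summand of $HH_n(kG)$ (with the symmetric-algebra isomorphism $HH_n(kG)\cong HH^n(kG)$ available to switch variance), and Swan's theorem supplies the non-vanishing of group cohomology in infinitely many degrees when $p$ divides $|G|$. Your hands-on treatment of the $p$-group case is sound, but, as you yourself observe, it does not propagate to general $G$, so it is ultimately a digression — the logical skeleton of your argument and of the paper's proof coincide.
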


Now, a result of Burghelea \cite[Theorem I.1]{Burghelea1985} shows that homology of group algebras can be computed in terms of Group Homology. The same holds for cohomology, see \cite[Theorem 2.11.2]{benson_II}. These results show, in particular, that the (co)homology groups of $G$ with respect to $k$, denoted by $H_n(G,k)$ and $H^n(G,k)$\footnote{In terms of $\Ext$-$\Tor$ functors, they can defined as  $H_n(G,k)=\Tor^{kG}_n(k,k)$ and 
$H^n(G,k)=\Ext_{kG}^n(k,k)$}, are direct summands, respectively, of $HH_n(kG)$ and $HH^n(kG)$. Thus, the proof can be concluded by using a result of R. Swan \cite{swan}: it guarantees that, if $\carac(k)=p$ divides the order of $G$, then $H^n(G,k)$ is non-zero for an infinite number of values of $n>0$. 

One final comment about Swan's article should be made: although it is focused in cohomology with coefficients in $\Z$, the author also remarks that his arguments are also valid for coefficients in $\F_p$, and therefore for any field $k$ of characteristic $p$, since $H^n(G,k)\cong H^n(G,\F_p)\otimes_{\F_p} k$.

\vspace{0.2cm} \noindent \textbf{Commutative algebras:} (In this topic, all algebras are assumed to be commutative.) As it can be seen in the table, two references were provided for this case. This was made, because it was proved independently by two groups of authors: Avramov \& Vigu\'{e}-Poirrier (1992) and the Buenos Aires Cyclic Homology Group (1994). One difference between their results is that the latter assumed the characteristic of the ground field to be zero while the former did not. Another notable aspect is that these articles were published more than 10 years prior to the statement of Han's conjecture. So, now we provide a few comments on how precisely Han's property can be deduced from them.

Basically, the following theorem (which is not restricted to finite-dimensional algebras) was proved:

\begin{theorem}
A finitely generated commutative algebra $A$ is smooth if, and only if, its Hochschild homological dimension is finite.
\label{teo:comuta}
\end{theorem}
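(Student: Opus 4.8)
The plan is to prove the two implications separately. The forward implication (smooth $\Rightarrow$ finite $\mathrm{hh.dim}$) is essentially the Hochschild--Kostant--Rosenberg (HKR) theorem, while the converse carries all of the difficulty and is where I would concentrate the work.

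For smooth $\Rightarrow$ finite $\mathrm{hh.dim}$, I would invoke the HKR isomorphism: for a smooth finitely generated commutative $k$-algebra $A$ there are natural isomorphisms
\[
HH_n(A)\cong \Omega^n_{A/k}=\bigwedge\nolimits^{n}\Omega^1_{A/k}
\]
with the modules of K\"ahler $n$-forms. Since $A$ is smooth, $\Omega^1_{A/k}$ is a finitely generated projective $A$-module whose local rank equals the local dimension, hence is bounded by $d=\dim A$; therefore $\bigwedge^n\Omega^1_{A/k}=0$ for $n>d$, giving $\mathrm{hh.dim}(A)\le d<\infty$.

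For the converse I would pass from Hochschild homology to the cotangent complex $L_{A/k}$ (equivalently, Andr\'e--Quillen homology $D_\ast(A/k,-)$), for which there is a clean homological criterion: a finitely generated $A$ is smooth over $k$ exactly when $L_{A/k}$ is quasi-isomorphic to a finitely generated projective module in degree zero, i.e. when $D_i(A/k,A)=0$ for all $i\ge 1$ and $\Omega^1_{A/k}$ is projective. Since both smoothness and finiteness of $\mathrm{hh.dim}$ are local, I would first localize at a point and reduce to the local case. In characteristic zero the bridge is the Hodge ($\lambda$-)decomposition $HH_n(A)=\bigoplus_{i}HH_n^{(i)}(A)$, whose graded pieces recover the derived exterior powers of $L_{A/k}$; the hypothesis $\mathrm{hh.dim}(A)<\infty$ then forces the Andr\'e--Quillen groups to vanish in high degrees. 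The decisive step is a rigidity theorem for the cotangent complex: if $L_{A/k}$ carried any homology in positive degree, its derived exterior (or divided) powers would contribute nonzero classes to $HH_n(A)$ for infinitely many $n$, so finiteness of $\mathrm{hh.dim}$ propagates the vanishing all the way down to degree one, yielding smoothness.

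The hardest part, and the heart of both proofs, is exactly this rigidity: ruling out that the higher cotangent homology is ``eventually zero'' without being zero. In positive characteristic the Hodge decomposition is unavailable, so one cannot read off $D_\ast(A/k,A)$ directly from $HH_\ast(A)$; this is why Avramov and Vigu\'e-Poirrier must work harder than the characteristic-zero (Buenos Aires) argument, controlling $L_{A/k}$ through its behaviour under base change and localization and through the structure theory that separates smooth from merely regular rings.
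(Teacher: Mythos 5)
You should first be aware that the paper does not prove this theorem at all: it is quoted from Avramov--Vigu\'e-Poirrier \cite{vigue_92} and the Buenos Aires Cyclic Homology Group \cite{buenos_aires}, and the survey's own contribution is only the subsequent bridge (smooth $\Rightarrow$ regular $\Rightarrow$ global dimension equals Krull dimension, which is finite for finitely generated algebras) needed to deduce Han's property. So your proposal has to be measured against those original papers. Your forward direction is sound: the Hochschild--Kostant--Rosenberg isomorphism $HH_n(A)\cong\Omega^n_{A/k}$ holds for smooth algebras in any characteristic, and these exterior powers vanish above the Krull dimension, so $\mathrm{hh.dim}(A)\leqslant\dim A<\infty$.

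The converse, however, contains a genuine gap. The ``rigidity theorem'' you invoke --- that any homology of the cotangent complex $L_{A/k}$ in positive degree would force $HH_n(A)\neq 0$ for infinitely many $n$ --- is not an available lemma that you may cite and then assemble with localization and the Hodge decomposition: combined with the standard characterization of smoothness via $L_{A/k}$, that statement \emph{is} the hard implication of the theorem, and proving it is where all of the work of \cite{vigue_92} and \cite{buenos_aires} lies. Avramov and Vigu\'e-Poirrier establish it by reducing to a local ring and producing explicit nonvanishing classes in arbitrarily high degrees out of divided-power (Tate) resolutions and the homotopy Lie algebra of the local ring; in characteristic zero the Buenos Aires group obtain the propagation from the $\lambda$-decomposition of $HH_*$ together with an analysis of its graded pieces. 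Your sketch names this step, concedes it is ``the hardest part,'' and leaves it unproved; as written, the argument therefore reduces the theorem to an unproven assertion equivalent to the theorem itself. To close the gap you would need to actually carry out one of those two arguments (or an alternative proof that non-smooth points of a finitely generated algebra produce Hochschild classes in unboundedly high degrees), and this cannot be waved through: the phenomenon you must rule out, homology that is ``eventually zero without being zero,'' is exactly what fails for naive approaches and what the divided-power structure is used to exclude.
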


Now, we will outline that smoothness for finitely generated 
algebras implies in finite global dimension -- and even more for artinian algebras: it is equal to zero. This, together with the theorem, proves Han's property for commutative finitely generated algebras -- and, in particular, for finite-dimensional ones.

Smooth noetherian algebras are, in particular, regular, cf. \cite[Cor. 9.3.13]{weibel_1994}. This implies that the global dimension coincides with the Krull dimension for these algebras, see \cite[5.94]{lam_lec}. Now, one just need to note that finitely generated algebras have finite Krull dimension -- and artinian algebras have zero dimension. Indeed, the Krull dimension of $k[x_1,\hdots, x_n]/I$ is no bigger than $n$ for any ideal $I$. Therefore, every smooth finitely generated algebra has finite global dimension, which is equal to zero when the algebra is finite-dimensional.

\vspace{0.2cm}
\noindent
\textbf{Exterior algebras and quantum complete intersections:}
These two examples share some properties: for instance, they are both Frobenius and local. Furthermore, the results in the references show that their Hochschild homological dimensions are both infinite. By proposition \ref{prop: hoch_finita}, this implies in infinite global dimension (if the field is perfect). However, this conclusion can be also deduced (cf. example \ref{exe:self}) from the more elementary fact that they are non-semisimple Frobenius algebras. Now, we define these algebras and provide some details in these directions.

Given a vector space $V$ over $k$ with basis $\{e_1,\hdots, e_n\}$, the $k$th component of its exterior algebra can be defined as the following quotient:
\[\Lambda^k(V)\vcentcolon =
\dfrac{V^{\otimes k}}
{\langle e_1\otimes\hdots\otimes e_k -\textrm{sgn}(\sigma) e_{\sigma(1)}\otimes\hdots\otimes e_{\sigma(k)}\mid \sigma \in S_k\rangle},
\]
where $S_k$ denotes the symmetric group. In this manner, we define the \textit{exterior algebra of $V$} to be the graded algebra
$\Lambda(V)\vcentcolon=\oplus_{i=0}^n\Lambda^i(V)$, where the product of two elements is simply given by concatenation the tensor products. It is possible to notice that $\dim_k(\Lambda^i(V))=\binom{n}{i}$ and, therefore, that $\dim_k(\Lambda(V))=2^n$.

One can note that $J=\oplus_{i=1}^n\Lambda^i(V)$ is the unique maximal ideal of $\Lambda(V)$ -- which coincides with its Jacobson radical -- so that $\Lambda(V)$ is a local algebra. Dually, we see that $I_0=\Lambda^n(V)$ is the unique minimal ideal of $\Lambda(V)$, since it is a one-dimensional ideal and, for every $0\neq a\in \Lambda(V)$, there exists some $b\in\Lambda(V)$ such that $0\neq ab\in\Lambda^n(V)$. In this manner, any linear functional 
$\lambda\colon\Lambda(V)\to k$ such that $\lambda(I_0)\neq 0$ satisfies the following property: 
for every ideal $I\neq 0$ of $\Lambda(V)$, we have that $\ker(\lambda)\not\supseteq I$. The existence of such $\lambda$ is equivalent to saying that the exterior algebra is Frobenius, see \cite[3.15]{lam_lec}. Using this -- and that, because of $J\neq 0$, it cannot be semisimple -- we can conclude that the global dimension of $\Lambda(V)$ is infinite indeed.

Now, it is possible to see the same properties are satisfied by \textit{quantum complete intersections}\footnote{One motivation for this terminology is that these algebras are the ``quantum version'' of $k[x,y]/(x^a,y^b)$, which are examples of complete intersections rings in the sense of Commutative Algebra. Here, the word ``quantum'' means that the algebra has a relation of quasi-commutativity. This meaning of ``quantum'' was brought to Algebra with the introduction of quantum groups during the '80s, see \cite{Drinfeld1987}. In some applications, the parameter $q$ is interpreted as Planck's constant.}, i.e. algebras of the form
\[A=\frac{k\langle x, y\rangle}{(x^a,xy-qyx,y^b)}
\]
for some $a,b\geqslant 2$ and $0\neq q\in k$. As the ideal $J=(x,y)\subset A$ can be seen to be the unique maximal ideal of $A$, we conclude that $A$ is local -- and not semisimple. The fact that it is Frobenius may be retrieved from \cite[p.509]{bergh2008quantum}.

One of the most interesting aspects of these algebras is that the case $a=2=b$ provided the first counterexample to Happel's property: in \cite{happel_falso}, these algebras were proven to satisfy $\mathrm{hch.dim}(A)=2$ when $q$ is not a root of unity. However, as proved by Y. Han \cite[Proposition 5]{han}, its homology behaviour turned out to be non-pathological. They can be viewed, thus, as one of the main motivations to adapt Happel's question in order to get the proposition of Han's conjecture. With this in mind, the article of Bergh \& Erdmann \cite{bergh2008quantum} may be viewed as a generalization in two directions. On one hand, they showed that Han's property remains valid for arbitrary $a$ and $b$ and, on the other, that the cohomological dimension is still equal to 2 when (and precisely when) $q$ is not a root of unity.
 


Two years later, a class of algebras, which generalizes quantum complete intersections, was also proved to satisfy Han's property by showing that its Hochschild homological dimension is infinite. This class is composed by finitely generated algebras of the form
\[A=\frac{k\langle x_1,\hdots, x_n\rangle}{(f_1,\hdots,f_p)}\textrm{, where }
f_1\in k[x_1],\,\, f_i\in (x_2,\hdots, x_n) \textrm{ for }i\geqslant 2
\]
and the algebra $B={k[x_1]}/{(f_1)}$ is assumed to be not smooth. Note that quantum complete intersections are recaptured by taking $n=2$, $p=3$ and $f_1=x^a$, $f_2=xy-qyx$, $f_3=y^b$. The fact that $k[x]/(x^a)$ ($a\geqslant 2$) is not smooth can be deduced from example \ref{exe:polynomials} and theorem \ref{teo:comuta}, or by simply noting that its global dimension is infinite.

\vspace{0.2cm} \noindent \textbf{The examples of Bergh and Madsen:} 
P. Bergh and D. Madsen published two papers, in 2009 and 2017, showing Han's property for some examples of finite-dimensional algebras. The first one \cite{bergh2009hochschild} gives three examples of graded algebras. Their proof relies on a formula of K. Igusa -- relating the Euler characteristic of relative cyclic homology to the graded Cartan determinant -- which forces them to add the assumption that the characteristic of the ground field is zero. In the second article \cite{trivial_extension}, they prove Han's property for trivial extensions of three different classes of algebras.

Concerning the 2009's paper, we must say that, here, a finite-dimensional $k$-algebra $A$ being ``graded''  means that it has a $\N$-grading $A=\oplus_{i\geqslant 0} A_i$ and its Jacobson radical satisfies $J(A)=\oplus_{i\geqslant 1} A_i$. This is called by some authors a \textit{semisimple $\N$-grading}, or a \textit{non-trivial $\N$-grading}. Furthermore, the subalgebra $A_0\cong A/J(A)$ is assumed to be a product of copies of $k$.

\begin{example}
If $kQ$ is a path algebra, where $Q=(Q_0, Q_1)$ is a quiver with a set of vertices $Q_0$ and a set of arrows $Q_1$, then it has a natural grading given by the length of the paths: $kQ=\oplus_{i\geqslant 0} kQ_i$, where $kQ_i$ is the vector subspace generated by the paths of lenght $i$. 
We shall also write $R_Q$ to denote the ideal generated by the arrows $R_Q=\oplus_{i\geqslant 1} kQ_i$.

\begin{enumerate}
    \item If $Q$ does not have oriented cycles (i.e. $kQ$ is finite-dimensional), then, indeed, $J(kQ)$ coincides with $R_Q$ and $kQ_0$ is the sum of $|Q_0|$ copies of $k$.
    
    \item To obtain quotients with the same properties, we can take an \textit{admissible} ideal $I\subset kQ$, i.e. such that  $R_Q^m\subseteq I\subseteq R_Q^2$ for some $m\geqslant 2$. In this manner, $A=kQ/I$ is finite-dimensional (even if $Q$ has cycles) and $J(A)=R_Q/I$, see \cite[2.12]{assem_skowronski_simson_2006}. In order to preserve the grading of $kQ$, we must also assume that $I$ is \textit{homogeneous}, i.e. its generators are linear combinations of paths of the same length. Thus, $A=kQ/I$ has a semisimple $\N$-grading induced from $kQ$ with $A_0\cong A/J(A)\cong kQ/R_Q\cong k^{\oplus Q_0}$.
    
    \item If $A/J(A)$ is a product of copies of $k$, then, by Wedderburn's Splitting Theorem, we have that $A=A/J(A)\oplus J(A)$. In this manner, $A_0=A/J(A)$, $A_1=J(A)$ and $A_i=0$ for $i\geqslant 2$ provides us a grading as required above if, and only if, $A$ is a radical square-zero algebra (i.e. $J(A)^2=0$).

\end{enumerate}
\end{example}

With this in mind, we will make some comments about two of the three classes considered in the article. For the first one, since the authors already present its definition, we solely mention that the notion of $N$-Koszul algebras (where $N\geqslant 2$ is an integer) is a direct generalization of the characterization of Koszul algebras given in \cite[Prop. 2.1.3]{koszul}. The ordinary case is retrieved when $N=2$.

The second class of examples is given by quotients $A=kQ/I$ where $I$ is an admissible homogeneous ideal and $Q$ is a quiver with some loop (i.e. an arrow which starts and ends at the same vertex). The fact that they always have infinite global dimension is an instance of what is known as the ``no loops conjecture''. In \cite[4.4, 4.5, 5.5]{IGUSA1990161}, K. Igusa proved the conjecture for any admissible quotient of quiver algebras and for every algebra over an algebraically closed field\footnote{Taking into consideration Gabriel's construction of the quiver of an algebra $A$ (over an algebraically closed field), we say that it has a loop if $\Ext^1_A(S,S)\neq 0$ for some simple module $S$, see \cite[4.1.6]{benson_I} or \cite[section II.3]{assem_skowronski_simson_2006}. Noticeably, if $A$ is a path algebra, this equivalent to saying that its quiver contains a loop.}.
In this manner, Han's property is, again, proved after showing that the Hochschild homological dimension for these algebras is infinite. 

Restricting to local algebras, this gives us the following immediate consequence, which is much stronger than Han's property:
\vspace{6pt}
\begin{corollary}
Assume that $\carac(k)=0$ and $A=kQ/I$ is local, where $I$ is an admissible homogeneous ideal. If $\mathrm{hh.dim}(A)$ is finite, then $A\cong k$.
\end{corollary}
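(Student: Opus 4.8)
The plan is to read the structure of the quiver $Q$ directly off the hypothesis that $A$ is local, and then to invoke the result of Bergh and Madsen \cite[Theorem 4.7]{bergh2009hochschild} discussed above: in characteristic zero, a homogeneous admissible quotient of a path algebra whose quiver contains a loop has infinite Hochschild homological dimension. I would take that theorem as a black box and reduce everything else to elementary bookkeeping about one-vertex quivers.

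First I would translate locality into a condition on the vertex set. Since $A=kQ/I$ with $I$ admissible and homogeneous, the semisimple quotient is $A/J(A)\cong kQ/R_Q\cong k^{|Q_0|}$, where $Q_0$ denotes the set of vertices. An algebra is local precisely when $A/J(A)$ is a division ring, and the commutative product $k^{|Q_0|}$ is a field only when $|Q_0|=1$. Hence the hypothesis that $A$ is local forces $Q$ to have exactly one vertex.

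The key observation is then immediate: in a quiver with a single vertex, every arrow necessarily starts and ends at that vertex, so every arrow is a loop. Consequently, if $Q$ had even one arrow it would contain a loop, and the cited theorem would yield $\mathrm{hh.dim}(A)=\infty$, contradicting the assumption $\mathrm{hh.dim}(A)<\infty$. Reading this contrapositively, I conclude that $Q$ has no arrows at all, so $R_Q=0$.

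To close the argument, I would note that with a single vertex and no arrows one has $kQ\cong k$ and $R_Q=0$; since admissibility requires $I\subseteq R_Q^2=0$, the ideal $I$ vanishes, and therefore $A=kQ/I\cong k$. The only non-routine ingredient is the theorem of Bergh and Madsen itself, which rests on Igusa's proof of the no-loops conjecture together with the Euler-characteristic formula for relative cyclic homology; this is also where the hypothesis $\carac(k)=0$ genuinely enters, since that formula is unavailable in positive characteristic. The remaining reductions — from locality to a one-vertex quiver, and from the absence of arrows to $A\cong k$ — are each a single line and present no real obstacle.
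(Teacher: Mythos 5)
Your proof is correct and takes essentially the same route as the paper's: locality forces $Q$ to have a single vertex, the Bergh--Madsen theorem (via Igusa's no-loops result, which is where $\carac(k)=0$ enters) excludes loops and hence all arrows, and admissibility then yields $A\cong k$. The only cosmetic difference is that the paper deduces the one-vertex step from the fact that a local algebra has no idempotents other than $0$ and $1$, whereas you deduce it from $A/J(A)\cong k^{|Q_0|}$ having to be a division ring; both are standard one-line characterizations of locality.
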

\vspace{-10pt}
\begin{proof}
Since $A$ is local, it follows that $0$ and $1$ are its only idempotents, cf. \cite[19.2]{lam_fc}. Thus, $Q$ has only one vertex. By the above, in order to $\mathrm{hh.dim}(A)$ be finite, we also know that $Q$ cannot have loops. Therefore, $Q$ also does not have arrows.
\end{proof}

Now, let us focus our attention in Bergh and Madsen's second article. It is concerned with the \textit{trivial extension} of a finite-dimensional algebra $A$ by its dual $D(A)\vcentcolon=\Hom_k(A,k)$, considered as a $A$-bimodule. 
This algebra is denoted by $T(A)= A\ltimes D(A)$, its vector space structure is defined to be $A\oplus D(A)$ and its multiplication is given by
\[(a,f)\cdot (b,g)=(ab, ag +fb),\,\,\, a,b\in A,\, f,g\in D(A).
\]
These algebras receive the word ``trivial'' in the name, because they are related with the zero element in the cohomology group $HH^2(A,D(A))$, as it can be seen in \cite[p.312]{weibel_1994}.

One notable feature of these trivial extensions is that they are symmetric algebras \cite[16.62]{lam_lec} with Jacobson radical given by $J(A)\oplus D(A)$. Thus, they are non-semisimple self-injective algebras, so that $\gldim(T(A))=\infty$ for every $A\neq 0$. Therefore, once again it must be shown that their Hochschild homological dimensions are infinite.

Now, we sketch some ideas of the proof. The authors start by giving a presentation of $T(A)$ as an admissible quotient of a path algebra -- where it was necessary to assume the field to be algebraically closed. Using a criteria proved in a joint work with Y. Han \cite[Theorem 3.1]{trivial_extension} -- namely, that $kQ/I$ ($I$ admissible) has infinite Hochschild homological dimension whenever it has a $2$-truncated cycle -- they established Han's property for $T(A)$ if $A$ is either local or self-injective. At the end of the article, the property for $T(A)$ was proved when $A$ is graded by utilizing techniques from 2009's article -- in terms of the graded Cartan determinant.

\vspace{0.2cm} \noindent \textbf{Weyl algebras:} In comparison to the examples above, this class is rather exceptional. The $n$th Weyl algebra $A_n(k)$ (over a field $k$) is a certain infinite-dimensional noetherian noncommutative algebra. Its properties are considerably distinct relative to the field chosen: for example, in zero characteristic, $A_n(k)$ is a simple domain, but this is no longer true when fields of positive characteristic are considered. The global dimension can also measure these kind of differences (see \cite[Corollary 5.3]{weyl_global}):
\[\gldim(A_n(k))=
\begin{cases}
n, \textrm{ if } \carac(k)=0\\
2n, \textrm{ if } \carac(k)>0
\end{cases}.
\]

In \cite{solotar_Weyl}, assuming $\carac(k)=0$, the authors proved Han's and Happel's property for the quantum case of a class that generalizes the first Weyl algebra $A_1(k)$ -- while the ordinary non-quantum case was treated ten years prior \cite{weyl_2003}. More explicitly, Hochschild (co)homology was computed for these algebras, and a criterion determining when its global dimension is finite was given. Summing up, they proved that
\[\gldim(A)<\infty \iff \mathrm{hh.dim}(A)\leqslant 2
\iff \mathrm{hch.dim}(A) \leqslant 2.
\]
When this is the case, it was also shown that most of them satisfy $\gldim(A)=2$.


\subsection{Preservation of Han's property by Extensions}

Recently, some authors gave contributions for the understanding of Han's conjecture in a distinct way from above. Having in mind, for example, a possible inductive step in order to prove the conjecture, many efforts were given in the following direction: encountering extensions of algebras that preserves Han's property, i.e. pairs of algebras $B\subseteq A$ such that, if $B$ satisfy Han's property, then $A$ also satisfy it. We summarize these in table \ref{table:Han's extensions}. For instance, with such results, one can construct from the previous examples many other algebras satisfying Han's property.

\begin{table}[htb]
\centering
\renewcommand{\arraystretch}{1.4}
\begin{tabular}{|lll|} 
\hline
\multicolumn{1}{|l|}{\textbf{Type of Extension}} & \multicolumn{1}{l|}{\textbf{Assumption over the field}} & \textbf{Reference}              \\ 
\hline
corner algebras                                  & perfect                      & \cite[Theorem 2.21]{Cibils_null}   \\ 
\hline
E-triangular algebras                            & perfect                   & \cite[Corollary 2.22]{Cibils_null}     \\ 
\hline
null-square projective algebras                             & perfect                                            & \cite[Theorem 4.8]{Cibils_null}        \\ 
\hline
bounded                                          & -                                                        & \cite[Theorem 4.6]{Eduardo}        \\ 
\hline
strongly proj-bounded                            & -                                                        & \cite[Corollary 6.17]{Kostia}      \\
\hline
\end{tabular}
\vspace{0.4cm}
\caption{Extensions of finite-dimensional algebras which preserves Han's property. The list is organized in chronological order of the references.}
\label{table:Han's extensions}
\end{table}

\noindent\textbf{Null-square algebras:} In \cite{Cibils_null}, the authors analyse \textit{null-square algebras}, which are constructed using two algebras $A$ and $B$, one $A$-$B$-bimodule $N$ and one $B$-$A$-bimodule $M$. They are of the form
\[\begin{bmatrix}
A & N\\
M & B
\end{bmatrix},
\]
where the matrix multiplications are given by the bimodule structure of $M$ and $N$ and the convention $mn=nm=0$ for all $m\in M, n\in N$.
In this way, the algebra above is an extension of $A\times B$.


\begin{itemize}
    \item If $N=0$, then it is called a \textit{corner algebra}
    \item If $M$ and $N$ are projective bimodules, we call it a \textit{null-square projective algebra}.
\end{itemize}


Provided the field is perfect, it was proved that, if $A$ and $B$ are finite-dimensional algebras satisfying Han's property, then extensions for both types above also satisfy Han's property. For the case of corner algebras, property \ref{prop:corner} was used in order to reduce its homology to the ones of $A$ and $B$.

\vspace{0.2cm} \noindent \textbf{Bounded and proj-bounded extensions:} An extension of algebras $B\subseteq A$ is said to be \textit{bounded} if:
\begin{enumerate}
    \item $A/B$ is of finite projective dimension as a $B$-bimodule
    \item $A/B$ is a left or right projective B-module.
    \item $A/B$ is tensor-nilpotent over $B$, i.e. $(A/B) ^{\otimes_B n} =0$ for some $n$
\end{enumerate}

After a series of papers \cite{Eduardo_split, Eduardo_jacobi, Eduardo}, Cibils, Lanzilotta, Marcos and Solotar proved that if we have such an extension, then
\[B \textrm{ satisfies Han's property} \iff A \textrm{ satisfies Han's property}
\]
(without the necessity of assuming $A$ or $B$ to be finite-dimensional).
In this manner, given an algebra we may analyse it by associating an easier algebra, and it can be chosen to be either smaller or bigger\footnote{It may seem strange to think that a bigger algebra may be simpler, but, as we have already seen, trivial extensions of self-injective algebras are known to satisfy Han's property even though we do not have an answer for self-injective themselves. Unfortunately, trivial extensions are not bounded usually.}. More recently, this was generalized for ``strongly proj-bounded'' extensions.

The authors also provide some criteria in order to recognize if certain extensions satisfy the last two conditions of the definition, see \cite[Theorems 5.16, 5.20]{Eduardo}. Using them, some interesting examples could be given.

\begin{example}
Suppose that $A$ is an extension of $B=kQ/I$ ($I$ an admissible ideal) given by adding arrows to the quiver $Q$ and some possible relations.
    \begin{enumerate}
        \item The case when only arrows are added -- and no new relations -- was treated previously in \cite{inert_arrows} and can be seen as the motivating example for the development of bounded extensions. In this case, $A$ is isomorphic to the tensor algebra (over $B$) $T_B(N)$ for some projective $B$-bimodule $N$. Hence, this extension satisfies a property stronger than the first two conditions in the definition: $A/B$ is projective as a $B$-bimodule. The last condition is also satisfied when $A$ is finite-dimensional.
        
        \item \cite[Example 6.2]{Eduardo} Define $B=kQ$ for the quiver $Q$ below
        \[
        \begin{tikzcd}
        & 2 \arrow[r,"d"] & 3  &   \\
        5\arrow[r,"\mu"] & 1   \arrow[r, "b"] & 4 \arrow[u,"c"] 
        \end{tikzcd}
        \]
        and take the extension $A=k\tilde{Q}/J$, where $\tilde{Q}$ is given by adding the arrow $1\xrightarrow{a}2$ in $Q$ and $J=\langle da-cb\rangle$. It can be proved that this extension is bounded.
        Since $\tilde{Q}$ does not have oriented cycles, one of the criteria cited above guarantees that $A/B$ is tensor-nilpotent. 
        The fact that $A/B$ has finite projective dimension as a $(B\otimes B^{op})$-module follows from proposition \ref{prop: hoch_finita}:
        \[\gldim(B\otimes B^{op})=2\cdot \gldim(B)=2.
        \]
    \end{enumerate}
\end{example}


In order to prove their result, the authors used a so-called Jacobi-Zariski long nearly exact sequence, which relates the Hochschild homology (of algebras $B$ and $A$) with the relative Hochschild homology (of $A$ with respect to $B$). When $B\subseteq A$ is bounded, this sequence turns out to be exact (in higher degrees). This permits one to conclude that $HH_n(B)$ and $HH_n(A)$ are isomorphic for big enough values of $n$, see \cite[p.52]{Eduardo}.
In this way, Relative Homology -- a theory introduced by G. Hochschild in 1956 \cite{hoch_relativo} but still little used for associative algebras -- is utilized as a fundamental tool in the proofs.
Actually, the very own definition of strongly proj-bounded extensions -- for which, now, we turn our attention -- is made in relative homological terms.

\begin{definition}
An extension $B\subseteq A$ is \textit{strongly proj-bounded} if it satisfies items 1 and 2 from the definition of bounded extensions and, in addition:
\begin{enumerate}
    \setcounter{enumi}{2}
    \item there exists some $p\in\N$ such that $(A/B)^{\otimes_B n}$ is a projective $B$-bimodule for all $n>p$.
    \item $A$, seen as a $A$-bimodule, has finite $B$-relative projective dimension.
\end{enumerate}
\end{definition}

Both conditions above are satisfied if $A/B$ tensor-nilpotent, because $0$ is projective and, as it can be seen in \cite[Proposition 2.3]{Eduardo_jacobi}, there is a $B$-relative projective resolution of $A$ whose length is smaller than $m$ if $(A/B)^{\otimes_B m}= 0$. So, this is, indeed, a generalization of the notion of bounded extensions. In \cite[section 4.2]{Kostia}, examples of strongly proj-bounded extensions of finite-dimensional algebras which are not bounded are presented. Here, we restrict ourselves just to a simpler one.

\begin{example}
\begin{enumerate}
    \item If $B$ is separable (e.g. $B=k$) and $A=B\times B$, then $A/B=B$ is not tensor-nilpotent. However, since $B\otimes B^{op}$ is semisimple, we have that $A/B$ is projective as a $B$-bimodule. Using that $B$-relative projectivity is the same as ordinary projectivity when $B$ is semisimple, we can conclude that $B\subset A$ is strongly proj-bounded.
    
    \item The following example shows that, outside the realm of finite-dimensional algebras, the definition of bounded extensions is much more restrictive. Taking $A=k[x]$ and $B=k$, we conclude once again that $A/B$ is projective as a $B$-bimodule, so that the first three conditions of the last definiton are satisfied. Besides that, we have the following exact sequence
     \[
    0\rightarrow k[x,y]\xrightarrow{\cdot(x-y)} k[x,y]\rightarrow k[x] (\cong k[x,y]/(x-y))\rightarrow 0,
    \]
    so that the projective dimension of $A$ as a $A$-bimodule is $\leqslant 1$. However, this extension is not bounded, since $A/B=(x)$ is not tensor-nilpotent.
\end{enumerate}
\end{example}


\section{Frontiers of Han's conjecture}
\label{sec:fim}

Having said much about the results already shown towards Han's conjecture, we conclude the article with a few comments on possible future steps.

As noted in section \ref{sec:examples}, many of the examples which were proved to satisfy Han's property are Frobenius: group algebras, exterior algebras, quantum complete intersections, trivial extensions. Therefore, this class of algebras in general seems to be an appealing option to be analysed next. For a more concrete approach, one could start considering some specific cases: for instance, to analyse if the proof for group algebras could be carried out for finite-dimensional Hopf algebras in general. Another possibility would be to focus solely on symmetric algebras while, for a broader setting, self-injective algebras could be chosen as objects of study.

In another aspect, it could be interesting to investigate upper bounds for the realm of algebras satisfying Han's conjecture (which is stated only for finite-dimensional ones). For instance, there are many algebras of finite global dimension whose Hochschild homology is not concentrated in degree zero. For this, one can take Weyl algebras, which were considered above, or even polynomial algebras \cite[Ex. 9.1.3]{weibel_1994}:
\[\gldim(k[x_1,\hdots,x_n])=\mathrm{hh.dim}(k[x_1,\hdots,x_n])=n
\]
In this manner, finite global dimension implying in zero Hochschild homological dimension seems to be a behavior really restricted to finite-dimensional algebras. That said, Han's property (and its converse) is still valid for both examples above.

In \cite{Kostia}, a counterexample to it was given after considering pseudocompact algebras, i.e. topological algebras which are given by an inverse limit of finite-dimensional algebras (considered with discrete topology).

\begin{example} \cite[Remark 6.18]{Kostia} Taking the quiver with infinite vertices below
    \[Q:
    1 \longleftarrow 2 \longleftarrow 3 \longleftarrow \cdots,
    \]
    and the ideal $I=R_Q^2$ generated by paths of lenght two, the pseudocompact algebra $A=k[[Q]]/I$ satisfies $\gldim(A)=\infty$ and $\mathrm{hh.dim}(A)=0$
\end{example}

In opposition to this, it can also be shown that there are certain pseudocompact algebras, obtained from profinite groups, which actually satisfy Han's property, see \cite[Section 4.4]{eu}.

As it can be seen, the example above is not finitely generated nor noetherian. So, this gives a motivation to analyse (if there are any) counterexamples for Han's property in the following classes generalizing finite-dimensional algebras: noetherian, finitely generated, and artinian.

\section{Acknowledgments}
I wish to thank my advisor Kostiantyn Iusenko for his suggestions and support, and for stimulating me and my colleagues for the exchange of ideas. I also express my thanks specially to two of them, Roger R. Primolan and Matheus Schmidt. 
This study was financed in part by the Coordena\c{c}\~ao de Aperfei\c{c}oamento de Pessoal de N\'ivel Superior - Brasil (CAPES) - Finance Code 001.

\bibliographystyle{alphaurl}
\bibliography{Bibliografia.bib}

\end{document}